	\theoremstyle{nonumberplain}
	\newtheorem{proof}{Proof}
\newtheorem{definition}{Definition}[section]
\newtheorem{theorem}{Theorem}[section]
\newtheorem{corollary}[theorem]{Corollary}
\newtheorem{lemma}[theorem]{Lemma}
\newtheorem{proposition}[theorem]{Proposition}
\newtheorem{remark}[theorem]{Remark}
\begin{document}

\title{Convex expansion for finite distributive lattices with applications\thanks{This work was supported by NSFC (Grant No. 11761064).}
}

\author{Xu Wang, Xuxu Zhao and Haiyuan Yao\footnote{Corresponding author.}%
	\\ {\footnotesize College of Mathematics and Statistics, Northwest Normal University, Lanzhou 730070, PR China}}
\date{}

\maketitle

\begin{abstract}
The concept of cutting is first explicitly introduced. By the concept, a convex expansion for finite distributive lattices is considered. Thus, a more general method for drawing the Hasse diagram is given, and the rank generating function of a finite distributive lattice is obtained.
In addition, we have several enumerative properties on finite distributive lattices and verify the generalized Euler formula for polyhedrons.

\textbf{Key words:} finite distributive lattice; cutting (sublattice); convex expansion; filter lattice; enumerative property

\textbf{2010 AMS Subj. Class.:} 06D05; 06A07; 06B05 %%MSCclass

\end{abstract}

\section{Introduction}

Day introduced a doubling construction in a proof in \cite{aDay1970}. Using the construction, J\'onsson \cite{aJonss1971} considered the free products of lattices, Day \cite{aDay1977} provided an affirmative answer to a McKenzie's problem \cite{aMcKen1972}, Davey et al.\ \cite{aDaveyDQR1978} studied the exponents of lattices for finite distributive lattices, and Siv\'ak \cite{aSivak1978} characterized a class of finite lattices put by Schein \cite{aSchei1972}. Day et al.\ \cite{aDayGP1979} obtained a result of the construction on distributive lattice. In addition, Day et al.\ \cite{aDayNT1989} found various applications of doubling construction in the study of finite lattices etc., and Day \cite{aDay1992} obtained some results for the construction. Geyer \cite{aGeyer1993} extended the construction to obtain a lattice, and Nation \cite{aNatio1995} surveyed the doubling construction. Recently, Ern\'{e} et al.\ \cite{aErneHR2002} obtained a convex expansion for finite distributive lattices by the construction. Bertet et al.\ \cite{aBerteC2002} characterized the doubling convex sets in lattices and got a recognition algorithms.

In the paper a cutting of a finite distributive lattice is first explicitly introduced; in fact, Day \cite{aDay1992,aDayGP1979} already had the thought on cutting and got some results, we give two equivalent characterizations by the fundamental theorem for finite distributive lattices \cite{aBirkh1937}.
Using the concept of cutting, we obtain a convex expansion for finite distributive lattices; moreover, the expansion extends a result for finite distributive lattices in \cite{aErneHR2002} and the method for drawing the Hasse diagram of a filter lattice in \cite[P293]{bStanl2011}.
Using the convex expansion for finite distributive lattices, we have a series of consequences, including the rank generating function \cite{bStanl2011}, the number of convex Boolean sublattices of finite distributive lattice, and the number of particular elements; especially, the conclusions of Fibonacci and Lucas cubes \cite{aHsuPL1993,aMunarCS2001}. Finally, we verify the generalized Euler formula for polyhedrons by the relation of convex Boolean sublattices and antichains of posets.

\section{Preliminaries}

A set $P$ equipped with a partial order relation $\le$ is said to be a \emph{partially ordered set} (poset for short).
We write $x\parallel y$ if $x\nleq y$ and $y\ngeq x$.
A subposet $S$ of $P$ is a \emph{chain} if any two elements of $S$ are comparable, and denoted by $\mathbf{n}$ if $|S|=n$ \cite{bDaveyP2002}. The chain $S$ of $P$ is called \emph{saturated} if there does not exist $u \in P \setminus S$ such that $s < u < t$ for some $s, t \in S$ and such that $S\cup \{u\}$ is a chain \cite{bStanl2011}.
The set consisting of all minimal (resp.\ maximal) elements of $P$ is denoted by $\operatorname{Min}P$ (resp.\ $\operatorname{Max}P$).
Let $x\prec y$ denote $y$ \emph{cover}s $x$ in $P$, i.e.\ $x<y$ and $x\le z < y$ implies $z=x$.
The subset $Q$ of the poset $P$ is called \emph{convex} if $a,b \in Q$, $c \in P$, and $a \le c \le b$ imply that $c \in Q$.
The set of all filters of a poset $P$ is denoted by $\mathcal{F}(P)$, and carries the usual anti-inclusion order, forms a finite distributive lattice called \emph{filter lattice}.
Note that if $P=\emptyset$, then $\mathcal{F}(P) = \mathbf{1}$.
For a finite lattice $L$, we denote by $\hat0_L$ (resp. $\hat1_L$) the minimum (resp. maximum) element in $L$.
For a finite distributive lattice $L$, let $\mathop{\mathrm{Mi}}(L)$ denote the set of all meet-irreducible elements, regarded as a poset under the ordering of $L$ \cite{bGraet2011}.

For some concepts and notations not explained in the paper, refer to \cite{bDaveyP2002,bGraet2011,bStanl2011}.

\begin{theorem}[\cite{aBirkh1937,bStanl2011}]\label{th:ftfdl}
	Let $L$ be a finite distributive lattice. Then there is a unique (up to isomorphism) poset $P$ for which $L\cong \mathcal{F}(P)$. In fact $P\cong \mathop{\mathrm{Mi}}(L)$.
\end{theorem}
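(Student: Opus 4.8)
The plan is to establish existence and uniqueness separately. For existence I would exhibit an explicit isomorphism $L\cong\mathcal{F}(\mathop{\mathrm{Mi}}(L))$; for uniqueness I would show that the base poset can be read off from any filter lattice, so that $L\cong\mathcal{F}(P)$ forces $P\cong\mathop{\mathrm{Mi}}(L)$.

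For existence, define $\varphi\colon L\to\mathcal{F}(\mathop{\mathrm{Mi}}(L))$ by
\[
\varphi(x)=\{\,m\in\mathop{\mathrm{Mi}}(L):m\ge x\,\}.
\]
Each $\varphi(x)$ is an up-set of $\mathop{\mathrm{Mi}}(L)$, hence a filter, so $\varphi$ is well defined; and $x\le y$ gives $\varphi(y)\subseteq\varphi(x)$, i.e.\ $\varphi(x)\le\varphi(y)$ in the anti-inclusion order, so $\varphi$ is order-preserving. The argument then rests on two facts. First, in any finite lattice every element is the meet of the meet-irreducibles above it, that is $x=\bigwedge\varphi(x)$ (a standard maximal-counterexample induction on non-meet-irreducible elements). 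This yields injectivity, and also the converse implication $\varphi(x)\le\varphi(y)\Rightarrow x\le y$, since $\varphi(x)\supseteq\varphi(y)$ forces $x=\bigwedge\varphi(x)\le\bigwedge\varphi(y)=y$; hence $\varphi$ is an order-embedding. Second, and this is where distributivity enters, in a finite distributive lattice every meet-irreducible is meet-prime: $m\ge a\wedge b$ implies $m\ge a$ or $m\ge b$. Extending this to finite meets by induction gives surjectivity: for any filter $F$ put $x=\bigwedge F$ (with $\bigwedge\emptyset=\hat1_L$); then $F\subseteq\varphi(x)$ is immediate, and if $m\in\varphi(x)$ then $m\ge\bigwedge F$ forces $m\ge f$ for some $f\in F$, whence $m\in F$ because $F$ is an up-set. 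Thus $\varphi(x)=F$, and $\varphi$ is an order-isomorphism, hence a lattice isomorphism.

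For uniqueness, suppose $L\cong\mathcal{F}(P)$. I would show that $\mathop{\mathrm{Mi}}(\mathcal{F}(P))\cong P$ as posets, since then $\mathop{\mathrm{Mi}}(L)\cong\mathop{\mathrm{Mi}}(\mathcal{F}(P))\cong P$, the poset of meet-irreducibles being an isomorphism invariant. Concretely, associate to each $p\in P$ the principal filter ${\uparrow}p=\{q\in P:q\ge p\}$. In $\mathcal{F}(P)$ the upper covers of a filter $U$ are obtained by deleting a minimal element of $U$, so $U$ is meet-irreducible exactly when it has a unique minimal element, i.e.\ when $U={\uparrow}p$ for some $p$; thus $p\mapsto{\uparrow}p$ is a bijection onto $\mathop{\mathrm{Mi}}(\mathcal{F}(P))$. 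Since $p\le p'$ iff ${\uparrow}p'\subseteq{\uparrow}p$ iff ${\uparrow}p\le{\uparrow}p'$ under anti-inclusion, this bijection is an order-isomorphism. Combining with the existence half yields the claimed uniqueness.

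The main obstacle is the surjectivity step, the only place where distributivity is indispensable: the identification of meet-irreducible with meet-prime elements fails in non-distributive lattices such as $M_3$ and $N_5$, and without it the filters of $\mathop{\mathrm{Mi}}(L)$ strictly outnumber the elements of $L$. Some care is also needed with the degenerate conventions, namely the empty poset, the empty filter as the top element $\hat1_L$, and the exclusion of $\hat1_L$ from $\mathop{\mathrm{Mi}}(L)$, so as to keep $\bigwedge\emptyset=\hat1_L$ and the boundary case $P=\emptyset$, $\mathcal{F}(P)=\mathbf{1}$ consistent with the bijection.
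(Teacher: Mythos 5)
Your proof is correct, but there is nothing in the paper to compare it with: Theorem~\ref{th:ftfdl} is stated as the classical fundamental theorem for finite distributive lattices and is simply cited to Birkhoff and to Stanley's book, with no proof given in the paper itself. Your argument is essentially the standard proof from those sources, correctly dualized to the conventions used here (filters rather than order ideals, anti-inclusion order, meet-irreducibles rather than join-irreducibles). The existence half via $\varphi(x)=\{\,m\in\mathop{\mathrm{Mi}}(L): m\ge x\,\}$ is sound: the order-embedding property needs only the fact that every element of a finite lattice is the meet of the meet-irreducibles above it, and distributivity enters exactly where you say it must, in the meet-prime property that yields surjectivity (and indeed fails for $M_3$ and $N_5$). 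The uniqueness half is also right: under anti-inclusion the upper covers of a filter $U$ are the sets $U\setminus\{p\}$ with $p$ minimal in $U$, so the meet-irreducible filters are precisely the principal filters ${\uparrow}p$, giving $\mathop{\mathrm{Mi}}(\mathcal{F}(P))\cong P$, and meet-irreducibility is preserved by isomorphism. You also handle the boundary conventions ($\hat1_L\notin\mathop{\mathrm{Mi}}(L)$, $\bigwedge\emptyset=\hat1_L$, $\mathcal{F}(\emptyset)=\mathbf{1}$) consistently, which is where a sloppy write-up of this theorem usually goes wrong.
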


\section{Convex expansion}

%\begin{definition}
%	An interval is cutting if each maximal chain of the lattice has a nonempty intersection with it.
%\end{definition}
\begin{definition}\label{def:cutsub}
	Let $L$ be a finite distributive lattice and let $K$ be a convex sublattice, i.e.\ interval, of $L$. We say the lattice $K$ is a \emph{cutting} of $L$ if any maximal chain of $L$ contains at least one element of $K$.
\end{definition}

Especially, if $K=\{a\}$ ($a \ne \hat0_L$ or $\hat1_L$) is a cutting of a finite distributive lattice $L$, then $a$ is called a \emph{cutting element} of $L$ \cite{aYaoZ2015}.

Let $L=\mathcal{F}(P)$ be a finite distributive lattice and let $K$ be a interval of $L$, then $S = \hat{0}_K\setminus \hat1_K$ is a convex subposet of $P$.
In addition, let $S_0:= \mathop{\mathrm{Max}}(P\setminus\hat{0}_K) = \{\,z \in P\setminus\hat{0}_K \mid \hat{0}_K\cup\{z\} \in\mathcal{F}(P)\,\}$, and $S_1:= \mathop{\mathrm{Min}}\hat{1}_K = \{\,y \in \hat{1}_K \mid \hat{1}_K\setminus\{y\} \in\mathcal{F}(P)\,\}$.
Clearly $P={\downarrow S_0} \mathbin{\dot\cup} S \mathbin{\dot\cup} {\uparrow S_1}$ and $z \not> y$ for all $z \in S_0,\,y \in S_1$, where $P\mathbin{\dot\cup}S$ is the disjoint union of two posets $P$ and $S$ \cite{bDaveyP2002}.

Let $K$ be a cutting of a finite distributive lattice $L = \mathcal{F}(P)$.
The poset $P_K:=(P\cup \{x_K\}, \le)$ is defined as follows: $x\le y$ if $x\le y$ in $P$ for all $x,y\in P$; and $x_K\succ z$ for all $z\in S_0$, $x_K\prec y$ for all $y\in S_1$ and $x_K\parallel w$ for all $w\in S$.
This partially order is well-defined by Theorem~\ref{th:cslt}.

Let $P$ be a poset and $x$ is an element not necessarily in $P$. We denote by $P-x$ and $P*x$ the induced subposet obtained from $P\setminus\{x\}$ and $P\setminus({\mathop{\uparrow} x}\cup{\mathop{\downarrow} x})$, respectively. Obviously $x\parallel w$ for every $w\in P*x$, and $P*x \subseteq (P-x)$ for all $x\in P$, moreover $P*x \cong P-x$ if and only if $P \cong (P-x) \mathbin{\dot{\cup}} \{x\}$.

\begin{theorem}\label{th:cslt}
	Let $K$ be a interval of a finite distributive lattice $L=\mathcal{F}(P)$. The posets $S$, $S_0$ and $S_1$ are defined as above, then
	the following are equivalent:
	\begin{enumerate}
		\itemsep=0em \parskip=0em
		\item[(1)] $L={\uparrow \hat{0}_K}\cup{\downarrow \hat{1}_K}$;
		\item[(2)] $K$ is a cutting of $L$;
		\item[(3)] $z<y$ for all $z\in S_0$ and $y\in S_1$;
		\item[(4)] there exist $x_K$ such that $S = P_K*x_K$.
	\end{enumerate}
\end{theorem}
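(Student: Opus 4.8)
The plan is to invoke the fundamental theorem (Theorem~\ref{th:ftfdl}) and argue throughout in the filter lattice $L=\mathcal{F}(P)$. Write $A:=\hat0_K$ and $B:=\hat1_K$, viewed as filters of $P$; since $\hat0_K\le\hat1_K$ and the order is anti-inclusion, $B\subseteq A$, and $S=A\setminus B$ as noted in the excerpt. I would first record the two facts that let the extremal sets $S_0,S_1$ stand in for the whole complement and filter, namely $\downarrow S_0=P\setminus A$ and $\uparrow S_1=B$ in $P$. With this I introduce an auxiliary \emph{bridge} condition $(3')$: $z<y$ for all $z\in P\setminus A$ and all $y\in B$. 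The equivalence $(3)\Leftrightarrow(3')$ is then quick: $(3')\Rightarrow(3)$ is immediate since $S_0\subseteq P\setminus A$ and $S_1\subseteq B$, while for $(3)\Rightarrow(3')$, given $z\in P\setminus A$ and $y\in B$, I would use finiteness to pick $z\le z^\ast\in S_0$ and $y\ge y^\ast\in S_1$ and chain $z\le z^\ast<y^\ast\le y$. It then suffices to prove $(1)\Leftrightarrow(2)$, $(1)\Leftrightarrow(3')$ and $(3')\Leftrightarrow(4)$.

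For $(1)\Leftrightarrow(2)$ I would argue purely in $L$ (distributivity is not needed here). For $(2)\Rightarrow(1)$, take any $a\in L$, extend $\{a\}$ to a maximal chain $C$, and let $k\in C\cap K$; comparability of $a$ and $k$ in $C$ gives $a\ge k\ge\hat0_K$ or $a\le k\le\hat1_K$, i.e.\ $a\in{\uparrow\hat0_K}\cup{\downarrow\hat1_K}$. For $(1)\Rightarrow(2)$, take a maximal chain $C$ and set $c^\ast:=\max\{c\in C:c\le\hat1_K\}$. If $c^\ast=\hat1_L$ then $\hat1_K=\hat1_L\in C\cap K$. Otherwise let $c'$ be the cover of $c^\ast$ in $C$; by maximality $c'\not\le\hat1_K$, so $(1)$ forces $c'\ge\hat0_K$. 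Assuming for contradiction $c^\ast\not\ge\hat0_K$ gives $c^\ast<c^\ast\vee\hat0_K\le c'$, whence $c^\ast\vee\hat0_K=c'$ because $c^\ast\prec c'$; but then $c'=c^\ast\vee\hat0_K\le\hat1_K$, a contradiction. Hence $c^\ast\ge\hat0_K$ and $c^\ast\in C\cap K$.

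For $(1)\Leftrightarrow(3')$ I translate $(1)$ into filter language: it says every filter $F$ of $P$ satisfies $F\subseteq A$ or $F\supseteq B$. The direction $(3')\Rightarrow(1)$ follows from up-closure: if $F\not\subseteq A$, pick $z\in F\setminus A$; then $z<y$ for every $y\in B$ forces $B\subseteq F$. For $(1)\Rightarrow(3')$, given $z\in P\setminus A$ and $y\in B$, apply $(1)$ to the principal filter $\uparrow z$, which is not contained in $A$, to conclude $\uparrow z\supseteq B\ni y$, i.e.\ $z<y$. Finally, for $(3')\Leftrightarrow(4)$ I would check that $(3')$ is exactly the consistency condition for the construction of $P_K$: adjoining $x_K$ with $z\prec x_K$ ($z\in S_0$) and $x_K\prec y$ ($y\in S_1$) forces, by transitivity, $z<y$ for all $z\in P\setminus A$ and $y\in B$; under $(3')$ these relations already hold in $P$, so the order of $P_K$ restricts to that of $P$ and is a genuine partial order, whereas if $(3')$ fails some incomparable pair $z\parallel y$ would be made comparable, so no such $x_K$ exists. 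Given $(3')$, I then compute $\downarrow x_K\cap P=\downarrow S_0=P\setminus A$ and $\uparrow x_K\cap P=\uparrow S_1=B$, so the elements incomparable to $x_K$ are exactly $S$, i.e.\ $P_K*x_K=S$.

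The main obstacle I anticipate is the $(3')\Leftrightarrow(4)$ step: its content is not a computation but the well-definedness of $P_K$, and the argument must show that the transitive closure of the adjoined cover relations introduces no relation among elements of $P$ beyond those already present precisely when $(3')$ holds, and that the stated relations are genuine covers (which uses the extremality of $S_0$ and $S_1$). The cover manipulation in $(1)\Rightarrow(2)$ is the other delicate point, relying on $c^\ast\prec c'$ to pin down $c^\ast\vee\hat0_K=c'$; finiteness is used there, and in $(3)\Leftrightarrow(3')$, to produce maximal chains and extremal elements.
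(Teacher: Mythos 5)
Your proposal is correct, but it follows a genuinely different route from the paper's. The paper treats $(3)\Leftrightarrow(4)$ as trivial and $(1)\Leftrightarrow(2)$ as routine, and concentrates entirely on a direct proof of $(2)\Leftrightarrow(3)$ inside $\mathcal{F}(P)$: if some $z\in S_0$, $y\in S_1$ satisfy $z\parallel y$, it exhibits a filter $M\in K$ with $M\cup\{z\}\prec M\prec M\setminus\{y\}$ and builds from it a saturated chain through $\hat0_K\cup\{z\}$, $M\cup\{z\}\setminus\{y\}$ and $\hat1_K\setminus\{y\}$ that avoids $K$ (this is Figure~\ref{fig:csl}); conversely, given $(3)$, it takes in any maximal chain the first filter $M$ with $M\cap S_0=\emptyset$ and shows $\hat1_K\subseteq M\subseteq\hat0_K$, i.e.\ $M\in K$. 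You instead pivot everything around $(1)$ and your strengthened condition $(3')$: the observations ${\downarrow S_0}=P\setminus\hat0_K$ and ${\uparrow S_1}=\hat1_K$ make $(3)\Leftrightarrow(3')$ immediate, and then $(1)\Leftrightarrow(3')$ becomes a two-line up-closure argument whose only witness is the principal filter ${\uparrow z}$ --- considerably lighter than the paper's chain construction. You also prove in full the two steps the paper waves off: $(1)\Leftrightarrow(2)$ by the cover argument $c^\ast\prec c'$, $c^\ast\vee\hat0_K=c'$ (which, as you note, needs no distributivity and is valid in any finite lattice), and $(3')\Leftrightarrow(4)$ as the well-definedness of $P_K$. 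The trade-off: the paper's direct $(2)\Leftrightarrow(3)$ keeps the geometric chain picture that motivates the notion of cutting, while your decomposition is more modular and actually documents the assertions the paper leaves unproved. Two small patches for a final write-up: in $(1)\Rightarrow(3')$, the conclusion ${\uparrow z}\supseteq\hat1_K$ only gives $z\le y$, and strictness requires noting $z\ne y$, which follows since $y\in\hat1_K\subseteq\hat0_K$ while $z\notin\hat0_K$; and in $(3')\Rightarrow(4)$, besides transitivity you must also record antisymmetry of the extended relation (no cycle through $x_K$, since $(P\setminus\hat0_K)\cap\hat1_K=\emptyset$) and verify that the adjoined relations are genuine covers, using maximality of $S_0$ in $P\setminus\hat0_K$ and minimality of $S_1$ in $\hat1_K$ --- a point you flagged yourself.
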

\begin{proof}
	It is trivial that (3) $\iff$ (4) and it is not hard to verify (1) $\iff$ (2), we prove (2) $\iff$ (3) in what follows.
	
	Firstly, by contradiction, suppose that $z\in S_0$ (thus $z\notin \hat0_k$) and $y \in S_1 \subseteq \hat{1}_K$ such that $z \parallel y$, thus there exists $M\in K(\cong \mathcal{F}(S))$ such that $M\cup\{z\} \prec M \prec M\setminus\{y\}$, therefore $M\cup \{z\}\setminus \{y\}, \hat{0}_K\cup\{z\}, \hat{1}_K\setminus\{y\} \in \mathcal{F}(P)\setminus K$, it follows that there exists a saturated chain $\{\hat{0}_K\cup\{z\}, \dots, \hat{0}_K\cup\{z\}, M\cup \{z\}\setminus \{y\}, \hat{1}_K\setminus\{y\}, \dots, \hat{1}_K\setminus\{y\}\}$ containing no any element in $K$ (see Figure~\ref{fig:csl}), a contradiction.
	Therefore, the partial order on $P_K$ is well-defined.
	
	Conversely, let $P = \hat0_L \prec \cdots \prec M_0 \prec M \prec \cdots \prec \hat1_L=\emptyset$ be a saturated chain in $L$ such that $M_0\setminus M \subseteq S_0$ and $M \cap S_0 = \emptyset$. By the assumption, $\hat1_K = {\uparrow S_1} \subseteq M_0$, thus $\hat1_K \subseteq M$ and $\hat1_K \subseteq M \subseteq \hat0_K$, i.e.\ $M \in K$, thus $K$ is a cutting of $L$.
	%  Next by the definitions of $S_0$, $S_1$ and the partial order on $P_K$, either $x_K>x$ or $x_K<x$ for every $x\in P\setminus S$ (In fact ${\downarrow x_K} \setminus \{x_K\} = {\downarrow S_0}$ and ${\uparrow x_K} \setminus \{x_K\} = {\uparrow S_1}$). In addition $x_K\parallel w$ for all $w\in S$, thus $S = P*x_K$.
	%
	%  Finally, if $S\cong P*x_K$, then every saturated chain from $P$ to $\emptyset$ in $\mathcal{F}(P)$ do contain at least one element in $K(\cong\mathcal{F}(S))$, that is $K$ is a cutting of $L$.
\end{proof}

\begin{figure}[!htb] %%nocsl.tex
	\centering
	\begin{tikzpicture}[scale=0.8]
	\fill[lightgray] (-1,1) -- (0,2) -- (-1,3) -- (-2,2) -- cycle;
	
	\foreach \i in {0,1,2}
	{
		\foreach \k in {0,1}
		{
			\draw (\k-\i,\i+\k) -- (\k-\i+1,\i+\k+1)
			(\i-\k,\i+\k) -- (\i-\k-1,\i+1+\k);
		}
	}
	
	\foreach \i in {0,1,2}
	{
		\foreach \j in {0,1,2}
		{
			\filldraw[fill=white] (\i-\j,\i+\j) circle (1.5pt);
		}
	}
	
	\node at (-1,2) {$K$};
	\node[right] at (0,2) {$M$};
	\node[right] at (1,1) {$M\cup\{z\}$};
	\node[right] at (1,3) {$M\setminus\{y\}$};
	\node[right] at (2,2) {$M\cup\{z\}\setminus\{y\}$};
	\end{tikzpicture}
	\caption{Illustrating proof of Theorem~\ref{th:cslt}}\label{fig:csl}
\end{figure}
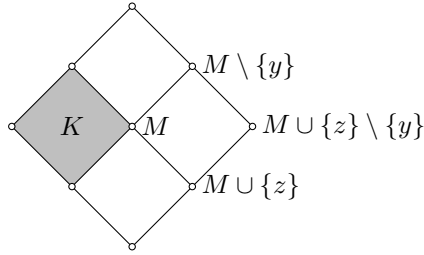

Observe that $\mathcal{F}(P_K)$ is a convex expansion of $L$, let $L\boxplus K$ denote the expansion, namely $L\boxplus K \cong \mathcal{F}(P_K)$ (see Figure~\ref{fig:lplk}, where $L\cong \mathcal{F}(P)$, $L\boxplus K \cong \mathcal{F}(P_K)$ and $K \cong \mathcal{F}(P*x_K)$); in general, if $L\boxplus K$ exists, then $K\boxplus L$ does not exist unless $L\cong K$.
Day \cite{aDay1992,aDayGP1979} had obtained a result on the distributivity of $L \boxplus K$.

\begin{lemma}[\cite{aDay1992,aDayGP1979}]
	Let $L$ be a finite distributive lattice. If $K \subseteq L$ is a interval, then $L \boxplus K$ ($L[K]$ in \cite{aDay1992,aDayGP1979}) is again distributive if and only if $L={\uparrow \hat{0}_K}\cup{\downarrow \hat{1}_K}$.
\end{lemma}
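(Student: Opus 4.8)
The statement is an equivalence, so the plan is to prove the two implications separately and to handle the nontrivial one by its contrapositive. Throughout I would keep two facts in view: first, that $L\boxplus K\cong\mathcal{F}(P_K)$ exactly when the relation on $P_K$ is a genuine partial order; and second, that by Theorem~\ref{th:ftfdl} a finite lattice is distributive precisely when it is the filter lattice of some poset. The entire argument is then engineered to reduce distributivity of $L\boxplus K$ to the well-definedness of $P_K$, which Theorem~\ref{th:cslt} has already identified with condition~(1).

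For the implication ($\Leftarrow$), suppose $L={\uparrow\hat0_K}\cup{\downarrow\hat1_K}$. By the equivalence (1)$\iff$(4) of Theorem~\ref{th:cslt} there is an element $x_K$ for which $P_K$ is a well-defined poset with $S=P_K*x_K$. Hence $L\boxplus K\cong\mathcal{F}(P_K)$ is the filter lattice of an honest poset, and Theorem~\ref{th:ftfdl} immediately yields that it is distributive. This direction is essentially free once Theorem~\ref{th:cslt} is available.

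For ($\Rightarrow$) I would argue the contrapositive: if (1) fails, then $L\boxplus K$ is not distributive. By (1)$\iff$(3) of Theorem~\ref{th:cslt}, the failure of (1) produces $z\in S_0$ and $y\in S_1$ with $z\parallel y$ (recall that $z\not>y$ always holds, so incomparability is the only alternative to $z<y$). Reusing the configuration from the proof of Theorem~\ref{th:cslt}, I would fix a filter $M\in K$ with $M\cup\{z\}\prec M\prec M\setminus\{y\}$; then in $L=\mathcal{F}(P)$ the interval $[\,M\cup\{z\},\,M\setminus\{y\}\,]$ is a $\mathbf 2\times\mathbf 2$ square with incomparable middle elements $M$ and $M\cup\{z\}\setminus\{y\}$, and of its four corners only $M$ lies in $K$ (since $z\notin\hat0_K$ and $y\in\hat1_K$ force the other three corners outside $K$, exactly as recorded in that proof). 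Passing to $L\boxplus K$, the doubling splits $M$ into a two-element chain while fixing the other three corners, so the five elements
\[
M\cup\{z\}\,<\,(M,0)\,<\,(M,1)\,<\,M\setminus\{y\},\qquad M\cup\{z\}\,<\,M\cup\{z\}\setminus\{y\}\,<\,M\setminus\{y\},
\]
with $M\cup\{z\}\setminus\{y\}$ incomparable to both $(M,0)$ and $(M,1)$, form a copy of the pentagon $N_5$. As any lattice containing $N_5$ as a sublattice fails to be distributive (indeed fails to be modular), $L\boxplus K$ is not distributive, as required.

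The main obstacle is the closing verification in the ($\Rightarrow$) direction: one must confirm that these five elements are genuinely closed under the meet and join of $L\boxplus K$, that is, that the relevant joins collapse to $M\setminus\{y\}$ and the relevant meets to $M\cup\{z\}$ \emph{in the whole doubled lattice} and not merely within the five-element set. This forces one to track precisely how Day's doubling rewrites the covering relations around $K$, and in particular to check that $M\cup\{z\}\setminus\{y\}$, being incomparable to $M$ in $L$, remains incomparable to both copies $(M,0)$ and $(M,1)$. Everything else—the two implications and the appeals to Theorems~\ref{th:ftfdl} and~\ref{th:cslt}—amounts to routine bookkeeping by comparison.
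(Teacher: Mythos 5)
There is nothing in the paper to compare your argument against: the paper states this lemma with a citation to Day \cite{aDay1992,aDayGP1979} and gives no proof of it (its Theorem~\ref{th:cslt} establishes the equivalence of condition (1) with the cutting property, but the distributivity statement itself is simply quoted). So your proposal is judged on its own merits, and it is correct. The backward direction is, as you say, immediate from Theorem~\ref{th:cslt} plus Theorem~\ref{th:ftfdl}, granted the paper's standing identification of $L\boxplus K$ (Day's $L[K]$) with $\mathcal{F}(P_K)$ when $K$ is a cutting --- the paper's ``Observe that $\mathcal{F}(P_K)$ is a convex expansion of $L$''; a fully self-contained write-up would verify that identification, but you are entitled to use it exactly as the paper does. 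For the forward direction, the closure check you flag as the main obstacle does go through, each half in one line. Write $b=M\cup\{z\}$, $t=M\setminus\{y\}$, $c=M\cup\{z\}\setminus\{y\}$; in $L$ (anti-inclusion order) one has $c\wedge M=c\cup M=b$ and $c\vee M=c\cap M=t$. For meets in $L\boxplus K$: a lower bound of $c$ and $(M,i)$ of the form $(k,j)$ with $k\in K$ would satisfy $k\le b$, i.e.\ $k\supseteq M\cup\{z\}$, whence $z\in k\subseteq\hat0_K$, contradicting $z\in S_0$; so all lower bounds lie in $L\setminus K$, are below $c\wedge M=b$ in $L$, hence below $b$ in $L\boxplus K$, and $c\wedge(M,i)=b$. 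For joins: any upper bound, whether in $L\setminus K$ or of the form $(k,j)$, has its $L$-component above $c\vee M=t$, and $t$ is itself an upper bound, so $c\vee(M,i)=t$. Incomparability of $c$ with both $(M,0)$ and $(M,1)$ is inherited from $c\parallel M$ because Day's order compares copies $(M,i)$ with elements outside $K$ exactly as it compares $M$. Thus your five elements do form a sublattice isomorphic to $N_5$, so $L\boxplus K$ is not even modular. One presentational caveat: your two directions tacitly use two different descriptions of $L\boxplus K$ (as $\mathcal{F}(P_K)$ when (1) holds, as Day's doubling in general); this is unavoidable, since $P_K$ exists only when $K$ is a cutting, but it should be said explicitly that $L\boxplus K$ means $L[K]$ throughout and that $\mathcal{F}(P_K)\cong L[K]$ under condition (1).
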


Moreover, it is easy to prove the following convex expansion for finite distributive lattices.

\begin{theorem}\label{th:cetfdl}
	If $K$ is a cutting of a finite distributive lattice $L = \mathcal{F}(P)$, then there exists $x_K$ such that
	\[
	\mathcal{F}(P_K) \cong \mathcal{F}(P) \boxplus \mathcal{F}(P_K*x_K).
	\]
	On the other hand, $\mathcal{F}(P)$ is also considered as a convex expansion, that is for every $x\in P$,
	\[
	\mathcal{F}(P)\cong \mathcal{F}(P-x)\boxplus \mathcal{F}(P*x).
	\]
\end{theorem}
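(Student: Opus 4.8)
The plan is to handle the two isomorphisms separately: the first is almost a matter of unwinding the definition of $\boxplus$ together with Theorem~\ref{th:cslt}, and the second reduces to the first once I exhibit the right cutting. For the first isomorphism, since $K$ is a cutting of $L=\mathcal{F}(P)$, the equivalence (2)$\iff$(4) of Theorem~\ref{th:cslt} supplies an element $x_K$ with $P_K*x_K=S$, so that $\mathcal{F}(P_K*x_K)=\mathcal{F}(S)\cong K$; the last identification is the one already used in the proof of Theorem~\ref{th:cslt}, where the filters $M$ with $\hat{1}_K\subseteq M\subseteq\hat{0}_K$ correspond order-isomorphically (under anti-inclusion) to the filters of $S=\hat{0}_K\setminus\hat{1}_K$ via $M\mapsto M\setminus\hat{1}_K$. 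Recalling $\mathcal{F}(P)=L$ and the defining relation $L\boxplus K\cong\mathcal{F}(P_K)$, these pieces combine to give $\mathcal{F}(P)\boxplus\mathcal{F}(P_K*x_K)\cong L\boxplus K\cong\mathcal{F}(P_K)$.

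For the second isomorphism I would fix $x\in P$, set $P'=P-x$, and manufacture a cutting $K'$ of $\mathcal{F}(P')$ that, together with $P'$, rebuilds $P$. The natural candidate is the interval $K'=[\hat{0}_{K'},\hat{1}_{K'}]$ in $\mathcal{F}(P')$ with $\hat{1}_{K'}={\uparrow}x\setminus\{x\}$ and $\hat{0}_{K'}=P'\setminus({\downarrow}x\setminus\{x\})$. First I would check that these are genuine filters of $P'$ (each is the complement in $P'$ of a strict order ideal, hence an up-set), and that, using the partition $P'=({\downarrow}x\setminus\{x\})\mathbin{\dot\cup}(P*x)\mathbin{\dot\cup}({\uparrow}x\setminus\{x\})$, the associated data are $S'=\hat{0}_{K'}\setminus\hat{1}_{K'}=P*x$, with $S_0'=\mathop{\mathrm{Max}}(P'\setminus\hat{0}_{K'})$ the set of lower covers of $x$ and $S_1'=\mathop{\mathrm{Min}}\hat{1}_{K'}$ the set of upper covers of $x$. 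Since any $z\in S_0'$ and $y\in S_1'$ satisfy $z\prec x\prec y$, hence $z<y$, condition (3) of Theorem~\ref{th:cslt} holds and $K'$ is indeed a cutting.

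It then remains to verify $(P')_{K'}\cong P$, and this is where I expect the only genuine work to lie. I would send $x_{K'}\mapsto x$ and take the identity on $P'$, then argue this is an order isomorphism by comparing the comparabilities of the two inserted points: by construction $x_{K'}$ covers exactly $S_0'$, is covered by exactly $S_1'$, and is incomparable to $S'$, while in $P$ the point $x$ has precisely the lower covers $S_0'$, upper covers $S_1'$, and incomparable set $P*x=S'$. The subtle step is to pass from these cover relations to the full order: using finiteness (saturated chains in $P$) one gets ${\downarrow}S_0'={\downarrow}x\setminus\{x\}$ and ${\uparrow}S_1'={\uparrow}x\setminus\{x\}$, since every element strictly below $x$ lies beneath some lower cover of $x$ and dually, so $x_{K'}$ and $x$ generate the same strict down- and up-sets. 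With the isomorphism in hand, applying the first isomorphism to $P'$ and $K'$ and using $(P')_{K'}*x_{K'}=S'=P*x$ yields $\mathcal{F}(P)\cong\mathcal{F}((P')_{K'})\cong\mathcal{F}(P')\boxplus\mathcal{F}(P*x)=\mathcal{F}(P-x)\boxplus\mathcal{F}(P*x)$, as required. The first isomorphism and the cutting check are essentially bookkeeping; the reconstruction $(P')_{K'}\cong P$ is the main obstacle, and finiteness of $P$ is exactly what makes it go through.
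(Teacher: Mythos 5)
Your proof is correct, and it follows what is evidently the intended argument: the paper itself omits the proof (calling the theorem ``easy to prove''), and your route --- using the equivalence (2)$\iff$(4) of Theorem~\ref{th:cslt} together with the definition $L\boxplus K\cong\mathcal{F}(P_K)$ and $K\cong\mathcal{F}(S)$ for the first isomorphism, then reconstructing $P\cong(P-x)_{K'}$ from the cutting $K'\cong\mathcal{F}(P*x)$ of $\mathcal{F}(P-x)$ for the second --- is precisely the natural unwinding of the machinery set up before the theorem. Your attention to the only nontrivial point (passing from matching cover relations to the full order via ${\downarrow}S_0'={\downarrow}x\setminus\{x\}$ and ${\uparrow}S_1'={\uparrow}x\setminus\{x\}$, which needs finiteness) supplies exactly the detail the paper leaves implicit.
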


\begin{figure}[!ht]  %%l&k.tex and lbpk.tex
	\centering
	\begin{subfigure}[b]{0.45\linewidth}
		\centering
		\begin{tikzpicture}
			\filldraw[fill=lightgray] (0,0) -- (2,1) -- (2,2) -- (0,1) -- cycle;
			\draw (0,1) -- (-1*0.306,1.732*0.306+1) -- (-1*0.306+2,1.732*0.306+2) -- (2,2)
			(0,0) -- (1*0.5,-1.732*0.5) -- (1*0.5+2,-1.732*0.5+1) -- (2,1);
			
			\foreach \i in {0,2}
			{
				\filldraw[fill=white] (\i,\i) circle (1.5pt);
			}
			\node at (1,1) {$K$};
			\node at (-0.2,0) {$\hat{0}_K$};
			\node at (2.25,2) {$\hat1_K$};
		\end{tikzpicture}
		\caption{the finite distributive lattice $L$ with a cutting $K$}
	\end{subfigure}
	\begin{subfigure}[b]{0.35\linewidth}
		\centering
		\begin{tikzpicture}
			\filldraw[fill=lightgray] (0,0) -- (2,1) -- (2,2) -- (0,1) -- cycle
			(0.707,-0.707) -- (2.707,1-0.707) -- (2.707,2-0.707) -- (0.707,1-0.707) -- cycle;
			\draw (0,1) -- (-1*0.306,1.732*0.306+1) -- (-1*0.306+2,1.732*0.306+2) -- (2,2);
			
			\draw[xshift=0.707cm,yshift=-0.707cm] (0,0) -- (1*0.5,-1.732*0.5) -- (1*0.5+2,-1.732*0.5+1) -- (2,1);
			
			\foreach \i in {0,1}
			{
				\foreach \j in {0,1}
				{
					\draw (2*\i,\i+\j)++(0.707,-0.707) -- (2*\i,\i+\j);
				}
			}
			\foreach \i in {0,2}
			{
				\foreach \j in {0,1}
				{
					\filldraw[fill=white] (\i+\j*0.707,\i-\j*0.707) circle (1.5pt);
				}
			}
			\node at (0.2,-0.507) {$x_K$};
			\node at (1,1) {$K$};
			\node at (1+0.707,1-0.707) {$K'(\cong K)$};
		\end{tikzpicture}
		\caption{the expansion $L \boxplus K$}
	\end{subfigure}
	\caption{The finite distributive lattices $L$ and $L\boxplus K$}\label{fig:lplk}
\end{figure}
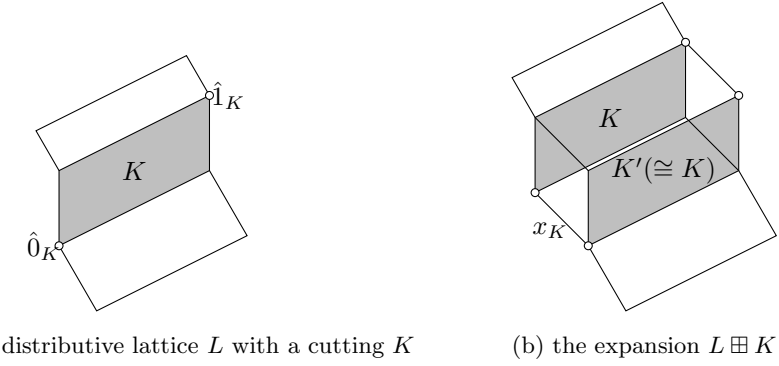

In particular, if $P-x \cong P*x$, then $\mathcal{F}(P) \cong \mathcal{F}(P-x) \mathbin{\square} \mathbf{2}$. Note that Theorem~\ref{th:cetfdl} holds for all $x\in P$, thus we get a more general method for drawing the Hasse diagram of $\mathcal{F}(P)$ than Stanley's in \cite[P293]{bStanl2011}. On the other hand, we have a consequence on Day's doubling construction.

\begin{remark}
	Let $L$ be a finite distributive lattice and let $K$ be a cutting of $L$. If $\hat{1}_K=\hat{1}_L$, then $L\uparrow \hat{0}_K$ in \cite{aErneHR2002} is the same as $L\boxplus K$. Thus, the operation $L\boxplus K$ is a special case of doubling construction on lattices in \cite{aDay1970}, and a generalization of the case in \cite[Section~2]{aErneHR2002}.
\end{remark}

In addition, we have two more general results than those in \cite[Section~2]{aErneHR2002}, namely Corollaries~\ref{th:ce} and \ref{th:mce}.

\begin{corollary}\label{th:ce}
	Let $P = P_0 \cup P_m$ be a finite poset, where $P_0 \cap P_m = \emptyset$ and $P_i=\{x_1,x_2,\dots,x_i\}$ for $i\in \{1,2,\dots,m\}$, then
	\begin{multline*}
	\mathcal{F}(P) \cong (\cdots((\mathcal{F}(P_0) \boxplus \mathcal{F}((P_0\cup P_1)*x_1)) \boxplus \mathcal{F}((P_0\cup P_2)*x_2)) \boxplus \cdots \\
	\boxplus \mathcal{F}((P_0\cup P_{m-1})*x_{m-1})) \boxplus \mathcal{F}((P_0\cup P_m)*x_m).
	\end{multline*}
\end{corollary}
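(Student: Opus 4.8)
The plan is to prove the isomorphism by induction on $m$, peeling off one element of $P_m$ at a time and invoking the second decomposition of Theorem~\ref{th:cetfdl} as the sole inductive step. To streamline the bookkeeping I would write $Q_i := P_0 \cup P_i$ for $0 \le i \le m$, so that $Q_0 = P_0$, $Q_m = P$, and the asserted identity becomes $\mathcal{F}(Q_m) \cong (\cdots(\mathcal{F}(Q_0) \boxplus \mathcal{F}(Q_1 * x_1)) \boxplus \cdots) \boxplus \mathcal{F}(Q_m * x_m)$, a left-associated tower of $m$ expansions.

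For the base case $m = 1$ I would apply Theorem~\ref{th:cetfdl} to the poset $Q_1 = P_0 \cup \{x_1\}$ at the element $x_1$, obtaining $\mathcal{F}(Q_1) \cong \mathcal{F}(Q_1 - x_1) \boxplus \mathcal{F}(Q_1 * x_1)$; since $Q_1 - x_1 = P_0 = Q_0$, this is precisely the claim for $m = 1$. For the inductive step, assuming the result for $m-1$, I would apply the same decomposition to $Q_m = P$ at the element $x_m$, which gives $\mathcal{F}(P) \cong \mathcal{F}(P - x_m) \boxplus \mathcal{F}(P * x_m)$. The one point that must be checked explicitly is the identification $P - x_m = Q_{m-1}$ as posets: deleting $x_m$ from $Q_m$ and passing to the induced order returns the subposet on $P_0 \cup \{x_1, \dots, x_{m-1}\}$, which is exactly $Q_{m-1}$ because the induced order of an induced subposet is again induced. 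Substituting the induction hypothesis for $\mathcal{F}(Q_{m-1}) = \mathcal{F}(P - x_m)$ into the outer expansion then yields the full left-associated tower, closing the induction.

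The step I would expect to draw scrutiny---though it turns out not to be a genuine obstacle---is the legitimacy of each $\boxplus$, i.e.\ whether at every stage the right-hand factor really is a cutting of the lattice being expanded (equivalently, whether the distributivity criterion $L = {\uparrow \hat{0}_K} \cup {\downarrow \hat{1}_K}$ recalled above, which by Theorem~\ref{th:cslt} characterises cuttings, is met). This requires no separate verification, since Theorem~\ref{th:cetfdl} already asserts $\mathcal{F}(Q) \cong \mathcal{F}(Q - x) \boxplus \mathcal{F}(Q * x)$ for every element $x$ of every finite poset $Q$; the cutting condition is therefore automatic at each peel. In particular no maximality or linear-extension hypothesis on the enumeration $x_1, \dots, x_m$ is needed, as the decomposition of Theorem~\ref{th:cetfdl} holds for an arbitrary deleted element, and the entire content of the corollary is the telescoping of these one-element expansions.
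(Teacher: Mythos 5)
Your proof is correct and takes essentially the same route as the paper: the paper states this corollary without a separate argument, as an immediate consequence of iterating the second decomposition $\mathcal{F}(Q)\cong\mathcal{F}(Q-x)\boxplus\mathcal{F}(Q*x)$ of Theorem~\ref{th:cetfdl}, which is precisely your induction. Your explicit checks---that $(P_0\cup P_i)-x_i = P_0\cup P_{i-1}$ as induced subposets, and that no linear-extension hypothesis on $x_1,\dots,x_m$ is needed since the theorem applies to an arbitrary element---are exactly the details the paper leaves to the reader.
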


Specially, if $P_0 = \emptyset$ in Corollary~\ref{th:ce}, a more general result is obtained by Theorem~\ref{th:ftfdl}.
\begin{corollary}\label{th:mce}
	Every finite distributive lattice with more than one element can be generated from one element lattice $\mathbf{1}$ by finite expansions $\boxplus$.
\end{corollary}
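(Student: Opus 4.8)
The plan is to deduce the statement directly from the structure theorem together with the iterated expansion already established in Corollary~\ref{th:ce}, specialized to the empty base poset. First I would take an arbitrary finite distributive lattice $L$ with $|L|>1$ and invoke Theorem~\ref{th:ftfdl} to fix a finite poset $P$ with $L\cong\mathcal{F}(P)$. The hypothesis $|L|>1$ forces $P\neq\emptyset$: by the convention recorded in the preliminaries, $\mathcal{F}(\emptyset)=\mathbf{1}$, so an empty $P$ would give $|L|=1$. Hence $m:=|P|\ge 1$, and I may enumerate the underlying set of $P$ as $x_1,\dots,x_m$ in any order, since the second clause of Theorem~\ref{th:cetfdl} is valid for every element of the poset.

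With this enumeration I set $P_0=\emptyset$ and $P_i=\{x_1,\dots,x_i\}$ for $1\le i\le m$, so that $P=P_0\cup P_m$ and $P_0\cap P_m=\emptyset$, exactly the configuration required by Corollary~\ref{th:ce}. Applying that corollary and using $P_0\cup P_i=P_i$ gives
\[
\mathcal{F}(P)\cong\bigl(\cdots\bigl((\mathcal{F}(\emptyset)\boxplus\mathcal{F}(P_1*x_1))\boxplus\mathcal{F}(P_2*x_2)\bigr)\boxplus\cdots\bigr)\boxplus\mathcal{F}(P_m*x_m),
\]
and since $\mathcal{F}(\emptyset)=\mathbf{1}$ the right-hand side is precisely the one-element lattice followed by $m$ successive expansions, the $i$-th step adjoining the factor $\mathcal{F}(P_i*x_i)$. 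Because $L\cong\mathcal{F}(P)$ and $m$ is finite, this realizes $L$ as generated from $\mathbf{1}$ by finitely many applications of $\boxplus$, which is the assertion.

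To make the argument airtight I would verify that each $\boxplus$ in the displayed expression is a legitimate expansion in the sense of Definition~\ref{def:cutsub}: at the $i$-th stage one passes from $\mathcal{F}(P_{i-1})=\mathcal{F}(P_i-x_i)$ to $\mathcal{F}(P_i)$, and Theorem~\ref{th:cetfdl} guarantees that $\mathcal{F}(P_i*x_i)$ is (isomorphic to) a cutting of $\mathcal{F}(P_i-x_i)$, so every intermediate lattice is a finite distributive lattice and every $\boxplus$ is defined. I do not expect a genuine obstacle, as the result is a direct specialization of Corollary~\ref{th:ce}; the only point demanding care is the base case, namely recognizing $\mathcal{F}(\emptyset)=\mathbf{1}$ so that the chain of expansions truly begins at the one-element lattice, and the mild bookkeeping that $|L|>1$ is exactly what makes $P$ nonempty and hence the generation non-vacuous.
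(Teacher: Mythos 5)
Your proof is correct and matches the paper's own route exactly: the paper derives this corollary precisely by setting $P_0=\emptyset$ in Corollary~\ref{th:ce} and invoking Theorem~\ref{th:ftfdl} to write $L\cong\mathcal{F}(P)$, with $\mathcal{F}(\emptyset)=\mathbf{1}$ as the base. Your additional bookkeeping (that $|L|>1$ forces $P\neq\emptyset$, and that each step is a legitimate expansion by the second clause of Theorem~\ref{th:cetfdl}) is sound and simply makes explicit what the paper leaves implicit.
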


The consequences for cut element is also obtained.

\begin{corollary}\label{coro:cut}
	Let $P$ be a poset and $x\in P$.
	Then $\mathcal{F}(P-x)$ has a cut element, if and only if there exist two cut elements $M_1,M_2\in \mathcal{F}(P)$ such that $M_1 \cong M_2\cup \{x\}$, if and only if $P*x = \emptyset$.
	%  If $P*x$ is empty, then the filter lattice $\mathcal{F}(P-x)$ has a cut element; and there exist two cut elements $M_1,M_2\in \mathcal{F}(P)$ such that %$M_1\prec M_2$ %, where $M_1\in \mathop{\mathrm{Mi}}(\mathcal{F}(P))$ and $M_2\in \mathop{\mathrm{Ji}}(\mathcal{F}(P))$ and
	%  $M_1 \cong M_2\cup \{x\}$.
	%  In addition, if there exists a unique element $x$ such that $P*x=\emptyset$, then $\mathcal{F}(P-x)$ has a unique cut element.
\end{corollary}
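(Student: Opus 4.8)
The plan is to name the three assertions, in the order stated, (A) ``$\mathcal{F}(P-x)$ has a cut element'', (B) ``there exist cut elements $M_1,M_2\in\mathcal{F}(P)$ with $M_1=M_2\cup\{x\}$'', and (C) ``$P*x=\emptyset$'', and to prove them equivalent by working entirely in the filter picture of Theorem~\ref{th:ftfdl}. The observation that ties (A) to (C) is Theorem~\ref{th:cetfdl}: since $\mathcal{F}(P)\cong\mathcal{F}(P-x)\boxplus\mathcal{F}(P*x)$, the cutting realizing this expansion is exactly $K=\mathcal{F}(P*x)$, so $P*x=\emptyset$ holds if and only if $K\cong\mathbf{1}$ is a one-element lattice, that is, a single-element cutting of $\mathcal{F}(P-x)$; by Definition~\ref{def:cutsub} such a cutting is precisely a cut element, provided it is neither $\hat0_L$ nor $\hat1_L$. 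I would therefore treat (C) as the pivot and establish the two equivalences (C)$\iff$(B) and (C)$\iff$(A), the doubling of Theorem~\ref{th:cetfdl} supplying the bridge that also explains the pair $M_1,M_2$.

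The heart of the argument is the direct filter computation for (C)$\iff$(B). For (C)$\Rightarrow$(B) I would exhibit the pair explicitly: set $M_1=\uparrow x$ and $M_2=\uparrow x\setminus\{x\}$. Both are filters of $P$, one covers the other, and $M_1=M_2\cup\{x\}$. To see each is a cut element I would test comparability against an arbitrary filter $N$, splitting on whether $x\in N$: if $x\in N$ then $\uparrow x\subseteq N$, while if $x\notin N$ then $N\cap{\downarrow x}=\emptyset$, so $N\subseteq P\setminus{\downarrow x}=\uparrow x\setminus\{x\}=M_2$, using $P=\uparrow x\cup\downarrow x$, which is exactly the hypothesis $P*x=\emptyset$. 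For the converse (B)$\Rightarrow$(C) I would first pin down the pair: since the cut element $M_2$ is comparable to $\uparrow x$ and $x\notin M_2$, necessarily $M_2\subseteq\uparrow x$, and together with $\uparrow x\subseteq M_1$ this forces $M_1=\uparrow x$ and $M_2=\uparrow x\setminus\{x\}$. Then, comparing the principal filter $\uparrow w$ of an arbitrary $w\in P$ against the cut element $M_1=\uparrow x$ yields either $\uparrow w\subseteq\uparrow x$, i.e.\ $w\ge x$, or $\uparrow x\subseteq\uparrow w$, i.e.\ $w\le x$; hence no $w$ is incomparable to $x$, so $P*x=\emptyset$.

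For (C)$\iff$(A) I would invoke the expansion once more: when $P*x=\emptyset$ the doubling of Theorem~\ref{th:cetfdl} replaces a single element $a$ of $\mathcal{F}(P-x)$ by the two comparable copies $M_1,M_2$ above, so $a$ is a cut element of $\mathcal{F}(P-x)$; conversely a single-element cutting associated with $x$ forces the expanding poset $P*x$ to be empty. The step I expect to be the main obstacle is the boundary behaviour: when $x$ is minimal or maximal in $P$ the candidates $\uparrow x$ and $\uparrow x\setminus\{x\}$ collapse to $\hat0_L$ or $\hat1_L$ and are then not proper cut elements, so these degenerate cases must be isolated, and one must be careful that the cut element produced in (A) is genuinely interior rather than accidental. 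Making the passage between ``the cutting associated with $x$'' and ``a cut element of $\mathcal{F}(P-x)$'' precise, and ruling out these extremal collapses, is where the real care is required; once the filter identifications $M_1=\uparrow x$ and $M_2=\uparrow x\setminus\{x\}$ are in hand, the remaining implications are routine.
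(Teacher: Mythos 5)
The paper states this corollary without proof, presenting it as a direct consequence of Theorem~\ref{th:cetfdl}, so there is no argument of the paper's to compare against line by line; judged on its own terms, your proposal gets the (B)$\iff$(C) equivalence right --- the identifications $M_1=\uparrow x$, $M_2=\uparrow x\setminus\{x\}$, the case split on whether $x\in N$, and the principal-filter argument for the converse are all correct. But the two difficulties you flagged and deferred are not technicalities that more care would remove: they are points at which the statement itself, read literally with the paper's definition of cut element ($a\ne\hat0_L,\hat1_L$), is false, so your plan cannot be completed. For the boundary issue: take $P=\{x<y\}$, so $x$ is minimal. Then $P*x=\emptyset$ and (C) holds; but $\mathcal{F}(P)=\{\emptyset,\{y\},\{x,y\}\}$ is a three-element chain in which the only pair with $M_1=M_2\cup\{x\}$ is $M_2=\{y\}$, $M_1=\{x,y\}=\hat0_{\mathcal{F}(P)}$, which is excluded from being a cut element, so (B) fails; and $\mathcal{F}(P-x)\cong\mathbf{2}$ has no cut element, so (A) fails as well. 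Hence (C) implies neither (A) nor (B) unless one adds the hypothesis that $x$ is neither minimal nor maximal in $P$.

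The second gap is more serious because your argument for it is circular. For (A)$\Rightarrow$(C) you argue that a ``single-element cutting associated with $x$'' forces $P*x=\emptyset$, but (A) asserts only that $\mathcal{F}(P-x)$ has \emph{some} cut element, which may have nothing to do with $x$. Take $P=\{x\}\mathbin{\dot\cup}\{y<z\}$: then $\mathcal{F}(P-x)\cong\mathbf{3}$ has the cut element $\{z\}$, yet $P*x=\{y,z\}\ne\emptyset$, and moreover $\mathcal{F}(P)\cong\mathbf{2}\mathbin{\square}\mathbf{3}$ has no cut element at all; so (A) implies neither (C) nor (B). Among the three conditions as printed, the only valid implications are (B)$\Rightarrow$(C) (your proof of this is sound) and (B)$\Rightarrow$(A) (via $a=\uparrow x\setminus\{x\}$ regarded as a filter of $P-x$). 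To obtain a true equivalence one must either add the hypothesis that $x$ is neither minimal nor maximal in $P$, or replace (A) by the assertion that the specific interval $\{\,M\in\mathcal{F}(P-x)\mid \uparrow x\setminus\{x\}\subseteq M\subseteq P\setminus{\downarrow x}\,\}$ --- the cutting that Theorem~\ref{th:cetfdl} associates with $x$ --- is a single element which is a cut element of $\mathcal{F}(P-x)$. Your strategy would prove that amended statement, but not the one given.
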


Let $P$ and $S$ be posets, by %$P\mathbin{\dot\cup}S$,
$P+S$ and $P\dotplus S$ denote %the disjoint union,
ordinal sum and vertical sum \cite{bDaveyP2002} of $P$ and $S$, respectively. As a consequence of Theorem \ref{th:cetfdl}, we have the following result.
\begin{corollary}[\cite{bDaveyP2002}]
	Let $P$ and $S$ be posets, then
	$\mathcal{F}(P+\mathbf{1})\cong\mathcal{F}(P)+\mathbf{1}, \mathcal{F}(\mathbf{1}+P)\cong\mathbf{1}+\mathcal{F}(P)$.
	Moreover,
	$\mathcal{F}(P\mathbin{\dot{\cup}} S) \cong \mathcal{F}(P)\mathbin{\square} \mathcal{F}(S)$ and $\mathcal{F}(P+S)\cong \mathcal{F}(P)\dotplus\mathcal{F}(S)$.
	
\end{corollary}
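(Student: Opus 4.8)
The plan is to prove all four isomorphisms directly from the description of a filter as an up-set together with the anti-inclusion order on $\mathcal{F}(-)$, and then to recognize that the first two statements are the special cases $S=\mathbf{1}$ and $P=\mathbf{1}$ of the last one. Throughout I would use that, under anti-inclusion, the whole poset is the minimum $\hat{0}$ of its filter lattice and $\emptyset$ is the maximum $\hat{1}$, that $F\vee G=F\cap G$ and $F\wedge G=F\cup G$, and that $\mathcal{F}(\mathbf{1})=\mathbf{2}$.

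First I would dispose of the product formula $\mathcal{F}(P\mathbin{\dot{\cup}}S)\cong\mathcal{F}(P)\mathbin{\square}\mathcal{F}(S)$. Since there are no relations between the two blocks of $P\mathbin{\dot{\cup}}S$, a subset is an up-set precisely when it is the disjoint union $F\cup G$ of an up-set $F$ of $P$ and an up-set $G$ of $S$; the map $F\cup G\mapsto(F,G)$ is then a bijection, and because $F_1\cup G_1\supseteq F_2\cup G_2$ is equivalent to $F_1\supseteq F_2$ together with $G_1\supseteq G_2$, it is an order-isomorphism for the anti-inclusion orders, which is exactly the direct product $\mathcal{F}(P)\mathbin{\square}\mathcal{F}(S)$.

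Next, the heart of the argument is the ordinal-sum formula $\mathcal{F}(P+S)\cong\mathcal{F}(P)\dotplus\mathcal{F}(S)$. The key point is that every element of $P$ lies below every element of $S$, so an up-set $F$ of $P+S$ either misses $P$ entirely—whence $F$ is an up-set of $S$—or meets $P$, whence it must contain all of $S$ and $F=U\cup S$ for a nonempty up-set $U$ of $P$. I would therefore split $\mathcal{F}(P+S)$ into the blocks $\mathcal{A}=\{\,G : G\in\mathcal{F}(S)\,\}$ and $\mathcal{B}=\{\,U\cup S : U\in\mathcal{F}(P)\,\}$, which share exactly the filter $S$ (the case $G=S$ in $\mathcal{A}$ and $U=\emptyset$ in $\mathcal{B}$). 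Under anti-inclusion $G\mapsto G$ gives $\mathcal{A}\cong\mathcal{F}(S)$ with minimum $S$, while $U\cup S\mapsto U$ gives $\mathcal{B}\cong\mathcal{F}(P)$ with maximum $S$; moreover $U\cup S\supseteq S\supseteq G$ shows that every element of $\mathcal{B}$ lies below every element of $\mathcal{A}$, and $G\supsetneq U\cup S$ forces $G=S=U\cup S$, so no further comparabilities appear. This is precisely the vertical sum gluing $\hat{1}_{\mathcal{F}(P)}$ to $\hat{0}_{\mathcal{F}(S)}$, i.e.\ $\mathcal{F}(P)\dotplus\mathcal{F}(S)$.

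The remaining two isomorphisms follow by specialization: taking $S=\mathbf{1}$ gives $\mathcal{F}(P+\mathbf{1})\cong\mathcal{F}(P)\dotplus\mathbf{2}$, and gluing a fresh $\mathbf{2}$ on top of $\mathcal{F}(P)$ adds a single new maximum, so this equals $\mathcal{F}(P)+\mathbf{1}$; symmetrically $P=\mathbf{1}$ yields $\mathbf{2}\dotplus\mathcal{F}(P)\cong\mathbf{1}+\mathcal{F}(P)$. Alternatively, since the statement is advertised as a consequence of Theorem~\ref{th:cetfdl}, one can prove the ordinal-sum formula by induction on $|S|$: choosing a maximal $s\in S$ gives $(P+S)-s=P+(S-s)$ and $(P+S)*s=S*s$, so Theorem~\ref{th:cetfdl} reads $\mathcal{F}(P+S)\cong\mathcal{F}(P+(S-s))\boxplus\mathcal{F}(S*s)$. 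I expect the main obstacle on this route to be structural rather than computational: one must check that the expansion $\boxplus\mathcal{F}(S*s)$, whose cutting lives entirely in the upper summand, commutes with the gluing $\dotplus$, namely $(\mathcal{F}(P)\dotplus\mathcal{F}(S-s))\boxplus\mathcal{F}(S*s)\cong\mathcal{F}(P)\dotplus(\mathcal{F}(S-s)\boxplus\mathcal{F}(S*s))$, after which the inductive hypothesis together with the decomposition of $\mathcal{F}(S)$ from Theorem~\ref{th:cetfdl} closes the argument. The direct filter computation sidesteps this compatibility check, which is why I would present it first.
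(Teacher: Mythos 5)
Your proof is correct, but it takes a different route from the paper: the paper offers no written proof at all for this corollary --- it is attributed to Davey--Priestley and merely announced as ``a consequence of Theorem~\ref{th:cetfdl}'', i.e.\ the intended derivation is by repeated convex expansion. Your primary argument is instead a direct, self-contained computation with up-sets under the anti-inclusion order: the decomposition of a filter of $P\mathbin{\dot\cup}S$ into a pair $(F,G)$ gives the product formula, and the dichotomy ``$F$ misses $P$'' versus ``$F\supseteq S$'' gives the two blocks $\mathcal{F}(S)$ and $\{U\cup S: U\in\mathcal{F}(P)\}$ glued at the single common element $S$, which is exactly the vertical sum; the two unary statements then follow as the specializations $S=\mathbf{1}$ and $P=\mathbf{1}$, using $\mathcal{F}(\mathbf{1})=\mathbf{2}$. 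This is complete and rigorous, and it honestly flags the one nontrivial point in the paper's intended route, namely that an expansion $\boxplus\,\mathcal{F}(S*s)$ whose cutting lies in the upper summand must commute with the gluing $\dotplus$ --- a compatibility check the paper never makes explicit. What the direct computation buys is independence from Theorem~\ref{th:cetfdl} (so the corollary could even precede it); what the paper's route buys is the thematic point that these classical identities of \cite{bDaveyP2002} are instances of the convex-expansion machinery. Two cosmetic remarks: the condition ``$G\supsetneq U\cup S$ forces $G=S=U\cup S$'' is better phrased as ``$G\supseteq U\cup S$ forces equality, so no strict comparability from $\mathcal{A}$ down to $\mathcal{B}$ occurs''; and you should state explicitly that $U\cup S\mapsto U$ and the identity on $\mathcal{A}$ also \emph{reflect} order, so that both blocks are embedded as subposets, which you clearly use.
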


\section{Enumerative properties}
It is easy to show that the enumeration result on $L \boxplus K$ by counting. 
Let $R_L(K,x)$ denote the rank generating function of $K$ in $L$, and $R(L,x) := R_L(L,x)$; and let $h_L(x)$ denote the height of $x$ in $L$ for $x \in L$.
The rank generating function of $L \boxplus K$ is obtained by Theorem~\ref{th:cetfdl}.

\begin{proposition}
	Let $L$ be a finite distributive lattice and $K$ is a cutting of $L$. The rank generating function of $L\boxplus K$ is
	%	\[
	%	R(L\boxplus K,x) = R_L({\downarrow K},x) + xR_L({\uparrow K},x) = R_{\downarrow K}({\downarrow K},x) + x^{\phi_L(\hat0_K)+1}R_{\uparrow K}({\uparrow K},x)
	%	\]
	\[
	R(L\boxplus K,x) = R_L({\downarrow K},x) + xR_L({\uparrow K},x) = R({\downarrow K},x) + x^{h_L(\hat0_K)+1}R({\uparrow K},x)
	\]
\end{proposition}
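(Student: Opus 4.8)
The plan is to write $L=\mathcal{F}(P)$ (legitimate by Theorem~\ref{th:ftfdl}), to realize $L\boxplus K$ as the filter lattice $\mathcal{F}(P_K)$ via Theorem~\ref{th:cetfdl}, and then to compute ranks directly on filters. Since every finite distributive lattice is graded, the rank of a filter coincides with its height, which for $F\in\mathcal{F}(P)$ is $h_L(F)=|P|-|F|$ and for $G\in\mathcal{F}(P_K)$ is $(|P|+1)-|G|$ (recall $P_K$ has the single extra point $x_K$). Thus the whole identity reduces to sorting the filters of $P_K$ and tracking how their co-sizes change.

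First I would partition $\mathcal{F}(P_K)$ according to whether $x_K\in G$. Using the defining cover relations of $x_K$ (namely $x_K$ covers every $z\in S_0$, is covered by every $y\in S_1$, and is incomparable to every $w\in S$), I would establish two bijections. The filters $G$ with $x_K\in G$ correspond, via $F=G\setminus\{x_K\}$, to the filters $F$ of $P$ with $\hat{1}_K\subseteq F$, that is (in the anti-inclusion order) to the elements of ${\downarrow K}={\downarrow \hat{1}_K}$; and the filters $G$ with $x_K\notin G$ correspond, via $F=G$, to the filters $F$ of $P$ with $F\cap S_0=\emptyset$, which by $S_0=\operatorname{Max}(P\setminus\hat{0}_K)$ together with the up-set property of $F$ means $F\subseteq\hat{0}_K$, i.e.\ the elements of ${\uparrow K}={\uparrow \hat{0}_K}$. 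Reading off co-sizes, a filter of the first type has rank $(|P|+1)-(|F|+1)=h_L(F)$ and so contributes $x^{h_L(F)}$ for $F\in{\downarrow K}$, while a filter of the second type has rank $(|P|+1)-|F|=h_L(F)+1$ and so contributes $x^{h_L(F)+1}$ for $F\in{\uparrow K}$; summing yields $R_L({\downarrow K},x)+xR_L({\uparrow K},x)$. The cutting hypothesis enters here through Theorem~\ref{th:cslt}(1): it guarantees $L={\downarrow K}\cup{\uparrow K}$ with overlap exactly $K$, so the two families exhaust $\mathcal{F}(P_K)$ and the elements of $K$ are precisely those counted twice, matching the doubling of the interval.

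For the second equality I would invoke gradedness once more. Because ${\downarrow K}={\downarrow \hat{1}_K}$ is an order ideal containing $\hat{0}_L$, every saturated chain from $\hat{0}_L$ to $F\in{\downarrow K}$ stays inside ${\downarrow K}$, so the height of $F$ within ${\downarrow K}$ equals $h_L(F)$ and hence $R_L({\downarrow K},x)=R({\downarrow K},x)$. Dually, ${\uparrow K}={\uparrow \hat{0}_K}$ is a filter with least element $\hat{0}_K$, and splitting a saturated chain from $\hat{0}_L$ to $F\in{\uparrow K}$ at $\hat{0}_K$ gives $h_L(F)=h_L(\hat{0}_K)+h_{{\uparrow K}}(F)$, whence $R_L({\uparrow K},x)=x^{h_L(\hat{0}_K)}R({\uparrow K},x)$; multiplying by $x$ produces the claimed form. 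The one point needing genuine care, and which I expect to be the main obstacle, is the verification of the two bijections in the second paragraph: one must check both that each prescribed $F$ really does extend to (respectively remain) a filter of $P_K$ and that no other filters arise, and this is exactly where the cover relations of $x_K$ and the definitions of $S_0$ and $S_1$ must be used precisely.
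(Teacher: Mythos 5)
Your proof is correct and is essentially the paper's intended argument: the paper gives no details, saying only that the formula ``is obtained by Theorem~\ref{th:cetfdl}'' by counting, and your filter-by-filter count in $\mathcal{F}(P_K)$ (splitting on $x_K\in G$ versus $x_K\notin G$, with ranks $|P_K|-|G|$) supplies exactly those details. One minor correction to your framing: the exhaustion of $\mathcal{F}(P_K)$ by the two families is automatic from the partition on membership of $x_K$, so the cutting hypothesis is not needed there; it is needed, via Theorems~\ref{th:cslt} and~\ref{th:cetfdl}, to make $P_K$ well-defined and to identify $\mathcal{F}(P_K)$ with $L\boxplus K$ in the first place.
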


In particular, we have the follow corollary.
\begin{corollary}\label{coro:rec-rgf}
	Let $L$ be a finite distributive lattice and $K$ is a cutting of $L$. The rank generating function of $L\boxplus K$ is
	\[
	R(L\boxplus K,x) = 
	\begin{cases}
	R(L,x) + x^{h_L(\hat0_K)+1}R(K,x), & \text{if } \hat1_K = \hat1_L; \\
	R(K,x) + xR(L,x), & \text{if } \hat0_K = \hat0_L.
	\end{cases}
	\]
\end{corollary}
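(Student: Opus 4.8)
The plan is to derive Corollary~\ref{coro:rec-rgf} directly from the preceding Proposition by specializing the interval $K$ to its two extreme positions. The Proposition already supplies two equivalent expressions for $R(L\boxplus K,x)$: one in terms of the $L$-relative rank generating functions $R_L({\downarrow K},x)$ and $R_L({\uparrow K},x)$, and one in terms of the intrinsic rank generating functions $R({\downarrow K},x)$ and $R({\uparrow K},x)$ together with the shift $x^{h_L(\hat0_K)+1}$. So the whole argument reduces to identifying ${\downarrow K}$ and ${\uparrow K}$ explicitly in each case and substituting.

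First I would treat the case $\hat1_K = \hat1_L$. Since $K$ is the interval $[\hat0_K,\hat1_K]$, here ${\downarrow K} = {\downarrow \hat1_K} = {\downarrow \hat1_L} = L$ and ${\uparrow K} = {\uparrow \hat0_K} = [\hat0_K,\hat1_L] = [\hat0_K,\hat1_K] = K$. Plugging these into the second form of the Proposition gives $R(L\boxplus K,x) = R(L,x) + x^{h_L(\hat0_K)+1}R(K,x)$, the first branch. Symmetrically, in the case $\hat0_K = \hat0_L$ one has ${\downarrow K} = {\downarrow \hat1_K} = [\hat0_L,\hat1_K] = K$ and ${\uparrow K} = {\uparrow \hat0_K} = {\uparrow \hat0_L} = L$; substituting into the first form yields $R(L\boxplus K,x) = R_L(K,x) + xR_L(L,x) = R(K,x) + xR(L,x)$, and the same expression also follows from the second form using $h_L(\hat0_K) = h_L(\hat0_L) = 0$. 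This is the second branch.

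The only point that needs a little care is the passage between $R_L$ and $R$, where the two notations must not be conflated. For the order ideal ${\downarrow K}$, which contains $\hat0_L$, the height in $L$ of each element agrees with its intrinsic height, so $R_L({\downarrow K},x) = R({\downarrow K},x)$; for the filter ${\uparrow K}$, whose minimum is $\hat0_K$, the $L$-height of each element exceeds its intrinsic height by exactly $h_L(\hat0_K)$, whence $R_L({\uparrow K},x) = x^{h_L(\hat0_K)}R({\uparrow K},x)$. These two height-shift identities are precisely what make the two forms in the Proposition coincide, and they are what justify the clean substitutions above. I do not expect any genuinely hard step: once these identities are in hand, the Corollary is a direct specialization of the Proposition.
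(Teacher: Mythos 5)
Your proposal is correct and matches the paper's own (implicit) argument: the paper introduces the corollary with ``In particular,'' i.e.\ it is meant to follow by exactly the specialization you perform, identifying ${\downarrow K}=L$, ${\uparrow K}=K$ when $\hat1_K=\hat1_L$ and ${\downarrow K}=K$, ${\uparrow K}=L$ when $\hat0_K=\hat0_L$ in the preceding proposition. Your explicit justification of the height-shift identities $R_L({\downarrow K},x)=R({\downarrow K},x)$ and $R_L({\uparrow K},x)=x^{h_L(\hat0_K)}R({\uparrow K},x)$ supplies detail the paper leaves unstated, and it is accurate since finite distributive lattices are graded.
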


Let $q_k(L)$ denote the number of convex Boolean lattice $\mathbf{B}_k$ in $L$, and let $q_k(L)=0$ if no $\mathbf{B}_k$ exists.
In fact, $q_k(L)$ is equal to the number of antichains of $k$ elements in $\operatorname{Mi}(L)$.

\begin{theorem}\label{th:enum}
	Let $L$ be a finite distributive lattice and let $K$ be a cutting of $L$. For $k\ge 0$, we have
	\[
	q_k(L\boxplus K) = q_k(L) + q_k(K) + q_{k-1}(K).
	\]
	%  \[
	%  p_d(L\boxplus K\boxplus K) = p_d(L\boxplus K) + p_{d-2}(K)
	%  \]
	Especially, $|L \boxplus K| = |L| + |K|.$
\end{theorem}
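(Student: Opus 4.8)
The plan is to carry out everything inside the filter model. By Theorem~\ref{th:cetfdl} I write $L=\mathcal{F}(P)$ and $L\boxplus K\cong\mathcal{F}(P_K)$ with $P_K=P\cup\{x_K\}$, and I record that $K\cong\mathcal{F}(S)$ for $S=P_K*x_K$. The combinatorial engine is the following description of convex Boolean sublattices, valid in any finite distributive lattice $\mathcal{F}(Q)$: a convex $\mathbf{B}_k$ is precisely an interval $[\,F\cup B,\,F\,]$ given by a filter $F$ of $Q$ together with a $k$-element set $B\subseteq\operatorname{Max}(Q\setminus F)$ (every such $B$ is automatically an antichain making $F\cup B$ a filter, and conversely $F\cup B$ being Boolean forces $B\subseteq\operatorname{Max}(Q\setminus F)$). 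Consequently $q_k(\mathcal{F}(Q))=\sum_{F}\binom{|\operatorname{Max}(Q\setminus F)|}{k}$, the sum ranging over all filters $F$ of $Q$; I would prove the recursion by evaluating this sum for $Q=P_K$ and reorganizing it.

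I would split the pairs $(F,B)$ counted for $\mathcal{F}(P_K)$ according to the role of the adjoined point $x_K$: (i) $x_K\in F$; (ii) $x_K\in B$; (iii) $x_K\notin F\cup B$. In case (ii) we have $x_K\notin F$, so $F\subseteq\hat0_K$, while the demand $x_K\in\operatorname{Max}(P_K\setminus F)$ forces $\hat1_K\subseteq F$; hence $F\in K$, and the remaining $k-1$ points of $B$ run over $\operatorname{Max}(P_K\setminus F)\setminus\{x_K\}$, which equals $\operatorname{Max}_S(S\setminus(F\cap S))$. Under the identification $F\leftrightarrow F\cap S$ of $K$ with the filters of $S$ this contributes exactly $q_{k-1}(K)$. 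In case (iii) with $F\in K$ the same identification gives $\operatorname{Max}(P_K\setminus F)\setminus\{x_K\}=\operatorname{Max}_S(S\setminus(F\cap S))$, now with all $k$ points chosen there, contributing $q_k(K)$.

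The heart of the argument is to see that case (i) together with the part of case (iii) for which $F\in\uparrow\hat0_K\setminus K$ reassemble into $q_k(L)$. In case (i), since $x_K\in F$ it is absent from the complement, so $\operatorname{Max}(P_K\setminus F)=\operatorname{Max}(P\setminus(F\setminus\{x_K\}))$, and $F\setminus\{x_K\}$ ranges exactly over the filters of $P$ lying in $\downarrow\hat1_K$. In case (iii) with $F\in\uparrow\hat0_K\setminus K$ one has $F\subseteq\hat0_K$ but $\hat1_K\not\subseteq F$, and here the cutting hypothesis is indispensable: adjoining $x_K$ can only destroy the maximality of the elements of $S_0$, each of which becomes dominated by $x_K$ in the complement, so that in general $\operatorname{Max}(P_K\setminus F)\setminus\{x_K\}=\operatorname{Max}(P\setminus F)\setminus\{z\in S_0:\uparrow z\setminus\{z\}\subseteq F\}$. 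By Theorem~\ref{th:cslt}(3) every $z\in S_0$ satisfies $z<y$ for all $y\in S_1$, hence $\uparrow z\supseteq\hat1_K$; therefore $\uparrow z\setminus\{z\}\subseteq F$ would force $\hat1_K\subseteq F$, impossible for $F\notin K$. Thus the deleted set is empty and $\operatorname{Max}(P_K\setminus F)\setminus\{x_K\}=\operatorname{Max}(P\setminus F)$ for all such $F$. Since $K$ is a cutting, Theorem~\ref{th:cslt}(1) gives $L=\downarrow\hat1_K\cup\uparrow\hat0_K$ with $\downarrow\hat1_K\cap\uparrow\hat0_K=K$, so the index sets $\downarrow\hat1_K$ (from (i)) and $\uparrow\hat0_K\setminus K$ (from (iii)) tile the filters of $P$ disjointly, and both contribute the summand $\binom{|\operatorname{Max}(P\setminus F)|}{k}$; their sum is therefore $\sum_{F}\binom{|\operatorname{Max}(P\setminus F)|}{k}=q_k(L)$.

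Adding the three contributions yields $q_k(L\boxplus K)=q_k(L)+q_k(K)+q_{k-1}(K)$. The main obstacle is exactly the maximal-element bookkeeping of case (iii): one must check that the only filters for which $x_K$ disturbs the complement-maxima are those in $K$, and it is precisely the cutting property $z<y$ (not merely $K$ being an interval) that confines this disturbance, for otherwise a stray $z\in S_0$ would remain maximal and inflate the $L$-count. Finally, the cardinality statement is the specialization $k=0$: since the copies of $\mathbf{B}_0$ are exactly the singletons we have $q_0(M)=|M|$, while $q_{-1}(K)=0$, so the recursion collapses to $|L\boxplus K|=|L|+|K|$.
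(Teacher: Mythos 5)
Your proof is correct. A comparison is worth spelling out, because the paper itself offers essentially no argument for Theorem~\ref{th:enum}: it only remarks beforehand that the result is easy to show by counting, together with the claim that $q_k(L)$ equals the number of $k$-element antichains of $\operatorname{Mi}(L)$. Your interval parametrization --- convex copies of $\mathbf{B}_k$ in $\mathcal{F}(Q)$ are exactly the pairs $(F,B)$ with $F$ a filter and $B$ a $k$-subset of $\operatorname{Max}(Q\setminus F)$ --- is the correct form of that counting principle, and it quietly repairs the paper's hint: the bare antichain count cannot be right (it would give $q_0(L)=1$ rather than $|L|$, and splitting the antichains of $P_K$ by whether they contain $x_K$ would produce the two-term recursion $q_k(L)+q_{k-1}(K)$, not the three-term one being proved); your pair count is instead equivalent to the paper's own Proposition~\ref{prop:rel-qd-}, $q_k(L)=\sum_j\binom{j}{k}d_j^-(L)$. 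Your three cases are then the filter-model rendering of the picture in Figure~\ref{fig:lplk}: case (i) together with the $F\in{\uparrow\hat0_K}\setminus K$ part of case (iii) recovers the embedded copy of $L$, the $F\in K$ part of case (iii) is the new copy $K'$, and case (ii) counts the cubes straddling the matching between $K$ and $K'$. The one genuinely delicate step --- that for $F\in{\uparrow\hat0_K}\setminus K$ no element of $S_0$ survives as a maximal element of the complement, so that $\operatorname{Max}(P_K\setminus F)=\operatorname{Max}(P\setminus F)$ --- is exactly where the cutting hypothesis enters via Theorem~\ref{th:cslt}(3), and you identify and verify it correctly; this is the point that a purely ``by counting'' justification glosses over, since for a mere interval $K$ a stray maximal $z\in S_0$ would indeed inflate the $L$-term and break the formula.
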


For some filter lattices of special posets, not only the number of elements is calculated, but also the structure is obtained.
%For example, the undirected Hasse diagram of filter lattice of the fences \cite{bStanl2011} is Fibonacci cubes \cite{pYao2007}.
For example, the undirected Hasse diagrams of filter lattices of the fences \cite{bStanl2011} and crown \cite{bBakerFR1970,aKellyR1974} are Fibonacci cubes and Lucas cubes, respectively \cite{pYao2007}.

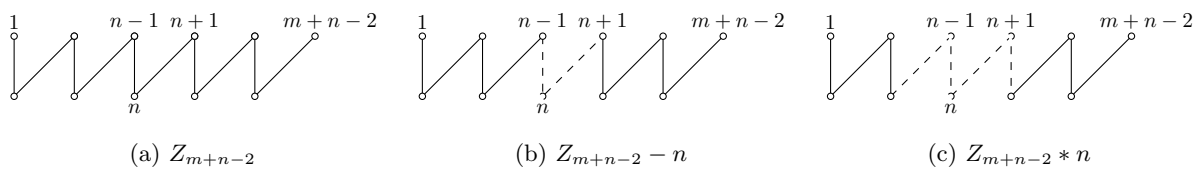
\begin{figure}[!htb]  %%fences.tex, fences-n.tex, fencesxn.tex
	\centering
	\begin{subfigure}[b]{0.32\linewidth}
		\centering
		\begin{tikzpicture}[scale=0.8, transform shape]
			\foreach \i in {0,...,4}
			{
				\draw (\i,1) -- (\i,0) -- (\i+1,1);
				\filldraw[fill=white] (\i,1) circle (1.5pt)
				(\i,0) circle (1.5pt)
				(\i+1,1) circle (1.5pt);
			}
			\node[above] at (0,1) {$1$};
			\node[above] at (2,1) {$n-1$};
			\node[above] at (3,1) {$n+1$};
			\node[below] at (2,0) {$n$};
			\node[above] at (5.25,1) {$m+n-2$};
		\end{tikzpicture}
		\caption{$Z_{m+n-2}$}
	\end{subfigure}
	\begin{subfigure}[b]{0.32\linewidth}
		\centering
		\begin{tikzpicture}[scale=0.8, transform shape]
			\draw[dashed] (2,1) -- (2,0) -- (3,1);
			\foreach \i in {0,1,3,4}
			{
				\draw (\i,1) -- (\i,0) -- (\i+1,1);
				\filldraw[fill=white] (\i,1) circle (1.5pt)
				(\i,0) circle (1.5pt);
			}
			
			\filldraw[fill=white,dashed] (2,0) circle (1.5pt);
			\filldraw[fill=white] (2,1) circle (1.5pt) (5,1) circle (1.5pt);
			\node[above] at (0,1) {$1$};
			\node[above] at (2,1) {$n-1$};
			\node[above] at (3,1) {$n+1$};
			\node[below] at (2,0) {$n$};
			\node[above] at (5.25,1) {$m+n-2$};
		\end{tikzpicture}
		\caption{$Z_{m+n-2}-n$}
	\end{subfigure}
	\begin{subfigure}[b]{0.32\linewidth}
		\centering
		\begin{tikzpicture}[scale=0.8, transform shape]
			\draw (1,1) -- (1,0) (3,0) -- (4,1);
			\foreach \i in {0,4}
			{
				\draw (\i,1) -- (\i,0) -- (\i+1,1);
			}
			\foreach \i in {2,3}
			{
				\draw[dashed] (\i-1,0) -- (\i,1) -- (\i,0);
			}
			\foreach \i in {0,1}
			{
				\foreach \j in {0,1}
				{
					\filldraw[fill=white] (\i,\j) circle (1.5pt)
					(\i+\j+3,\j) circle (1.5pt);
				}
			}
			
			\filldraw[fill=white,dashed] (2,0) circle (1.5pt) (2,1) circle (1.5pt) (3,1) circle (1.5pt);
			\node[above] at (0,1) {$1$};
			\node[above] at (2,1) {$n-1$};
			\node[above] at (3,1) {$n+1$};
			\node[below] at (2,0) {$n$};
			\node[above] at (5.25,1) {$m+n-2$};
		\end{tikzpicture}
		\caption{$Z_{m+n-2}*n$}
	\end{subfigure}
	\caption{An example for applications of Theorem~\ref{th:enum}}\label{fig:fibnum}
\end{figure}

By Theorems~\ref{th:cetfdl} and \ref{th:enum}, we have some results on Fibonacci cubes and Lucas cubes, that is Corollaries~\ref{coro:fib} and \ref{coro:lucas}.
\begin{corollary}\label{coro:fib}
	For the fences as shown in Figure~\ref{fig:fibnum}, we have
	\[
	\Gamma_{m+n-2} \cong (\Gamma_{m-2} \mathbin{\square} \Gamma_{n-1}) \boxplus (\Gamma_{m-3} \mathbin{\square} \Gamma_{n-2}),
	\]
	and
	\begin{equation}\label{eq:fc}
	q_k(\Gamma_{m+n-2}) = q_k(\Gamma_{m-2} \mathbin{\square} \Gamma_{n-1}) 
	+ q_k(\Gamma_{m-3} \mathbin{\square} \Gamma_{n-2}) + q_{k-1}(\Gamma_{m-3} \mathbin{\square} \Gamma_{n-2}),
	\end{equation}
	where $\Gamma_n$ is the $n$-th Fibonacci cube.
\end{corollary}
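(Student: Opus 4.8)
The plan is to read the two summands of the convex expansion directly off Theorem~\ref{th:cetfdl} and then count with Theorem~\ref{th:enum}, so that both assertions become bookkeeping on top of results already established. First I would record the identification underlying the statement: the filter lattice of the fence on $j$ elements is the $j$-th Fibonacci cube, that is $\mathcal{F}(Z_j)\cong\Gamma_j$, the fact quoted from \cite{pYao2007} just above. In particular $\Gamma_{m+n-2}\cong\mathcal{F}(Z_{m+n-2})$, so it suffices to describe the decomposition of $\mathcal{F}(Z_{m+n-2})$ furnished by the second half of Theorem~\ref{th:cetfdl}.

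Next I would apply Theorem~\ref{th:cetfdl} to $P=Z_{m+n-2}$ at the valley $x=n$, the minimal element drawn at the bottom of Figure~\ref{fig:fibnum}(a) and covered by the maximal elements $n-1$ and $n+1$, to get
\[
\Gamma_{m+n-2}\cong\mathcal{F}(Z_{m+n-2})\cong\mathcal{F}(Z_{m+n-2}-n)\boxplus\mathcal{F}(Z_{m+n-2}*n).
\]
The core of the argument is the purely order-theoretic identification of the two deleted posets, read from Figure~\ref{fig:fibnum}(b),(c). Since $n$ is minimal, removing it disconnects the fence into the two incomparable subfences on $\{1,\dots,n-1\}$ and $\{n+1,\dots,m+n-2\}$, so $Z_{m+n-2}-n\cong Z_{n-1}\mathbin{\dot\cup}Z_{m-2}$; removing ${\mathop{\uparrow}n}\cup{\mathop{\downarrow}n}=\{n-1,n,n+1\}$ leaves the subfences on $\{1,\dots,n-2\}$ and $\{n+2,\dots,m+n-2\}$, so $Z_{m+n-2}*n\cong Z_{n-2}\mathbin{\dot\cup}Z_{m-3}$. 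Applying the product rule $\mathcal{F}(P\mathbin{\dot\cup}S)\cong\mathcal{F}(P)\mathbin{\square}\mathcal{F}(S)$ together with $\mathcal{F}(Z_j)\cong\Gamma_j$ converts the two factors into $\Gamma_{m-2}\mathbin{\square}\Gamma_{n-1}$ and $\Gamma_{m-3}\mathbin{\square}\Gamma_{n-2}$, which is precisely the first displayed isomorphism.

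Finally I would derive \eqref{eq:fc} by inserting this expansion into Theorem~\ref{th:enum} with $L=\Gamma_{m-2}\mathbin{\square}\Gamma_{n-1}$ and $K=\Gamma_{m-3}\mathbin{\square}\Gamma_{n-2}$; the identity $q_k(L\boxplus K)=q_k(L)+q_k(K)+q_{k-1}(K)$ then gives the three-term formula verbatim. The only genuine care needed---the main obstacle, such as it is---is the bookkeeping of fence sizes and the boundary conventions: one must confirm that $n$ is a valley rather than a peak, so that ${\mathop{\downarrow}n}=\{n\}$ and ${\mathop{\uparrow}n}=\{n-1,n,n+1\}$, check the endpoint counts so that the four pieces really have $n-1,\,m-2,\,n-2,\,m-3$ elements, and handle degenerate ranges via $\mathcal{F}(\emptyset)=\mathbf{1}$. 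All of these are immediate from the diagram, and no new estimate is required beyond Theorems~\ref{th:cetfdl} and \ref{th:enum}.
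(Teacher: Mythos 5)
Your proposal is correct and follows exactly the route the paper intends: expand $\mathcal{F}(Z_{m+n-2})\cong\mathcal{F}(Z_{m+n-2}-n)\boxplus\mathcal{F}(Z_{m+n-2}*n)$ at the valley $n$ via Theorem~\ref{th:cetfdl} (this is precisely what Figure~\ref{fig:fibnum} depicts), identify the pieces as disjoint unions of smaller fences so the product rule $\mathcal{F}(P\mathbin{\dot\cup}S)\cong\mathcal{F}(P)\mathbin{\square}\mathcal{F}(S)$ gives the two factors, and then apply Theorem~\ref{th:enum} for \eqref{eq:fc}. The bookkeeping of fence sizes ($n-1$, $m-2$, $n-2$, $m-3$) and the degenerate cases are handled correctly, so there is nothing to add.
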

Hence some formulas for Fibonacci cubes are shown easily. For instance, let $k=0$ in \eqref{eq:fc}, we have
\[
F_{m+n} = F_mF_{n+1}+F_{m-1}F_n.
\]

In particular, let $m=2$, the equation \eqref{eq:fc} implies
%\[
%  \Gamma_n \cong \Gamma_{n-1} \boxplus \Gamma_{n-2}
%\]
%and
\[
F_{n+2} = F_{n+1}+F_n.
\]

%In addition, let $F_n^{(m)}$ denote the number of $m$-dimensional hypercubes in $\Gamma_{n-2}$, we get
%\[
%F_n^{(m)} = F_{n-1}^{(m)} + F_{n-2}^{(m)} + F_{n-2}^{(m-1)},
%\]
%for $m \in \{1,2,\dots,{\lceil \frac n2 \rceil-1}\}$. Obviously $F_n^{(0)} = F_n$.

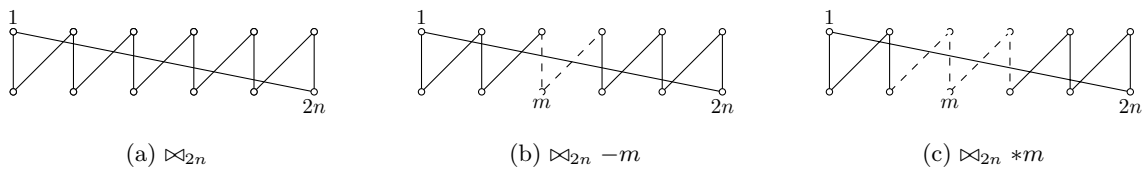
\begin{figure}[!htb]  %%crown2n.tex, crown2n-m.tex, crown2nxm.tex
	\centering
	\begin{subfigure}[b]{0.32\linewidth}
		\centering
		\begin{tikzpicture}[scale=0.8, transform shape]
			\foreach \i in {0,...,4}
			{
				\draw (\i,1) -- (\i,0) -- (\i+1,1);
				\filldraw[fill=white] (\i,1) circle (1.5pt)
				(\i,0) circle (1.5pt)
				(\i+1,1) circle (1.5pt);
			}
			\draw (5,1) -- (5,0) -- (0,1);
			\foreach \i in {0,...,5}
			{
				\filldraw[fill=white] (\i,1) circle (1.5pt)
				(\i,0) circle (1.5pt);
			}
			
			\node[above] at (0,1) {$1$};
			\node[below] at (5,0) {$2n$};
		\end{tikzpicture}
		\caption{$\bowtie_{2n}$}
	\end{subfigure}
	\begin{subfigure}[b]{0.32\linewidth}
		\centering
		\begin{tikzpicture}[scale=0.8, transform shape]
			\draw[dashed] (2,1) -- (2,0) -- (3,1);
			\draw (5,1) -- (5,0) -- (0,1);
			\foreach \i in {0,1,3,4}
			{
				\draw (\i,1) -- (\i,0) -- (\i+1,1);
				\filldraw[fill=white] (\i,1) circle (1.5pt)
				(\i,0) circle (1.5pt);
			}
			\filldraw[fill=white] (5,0) circle (1.5pt);
			
			\filldraw[fill=white,dashed] (2,0) circle (1.5pt);
			\filldraw[fill=white] (2,1) circle (1.5pt) (5,1) circle (1.5pt);
			\node[above] at (0,1) {$1$};
			\node[below] at (5,0) {$2n$};
			\node[below] at (2,0) {$m$};
		\end{tikzpicture}
		\caption{$\bowtie_{2n}-m$}
	\end{subfigure}
	\begin{subfigure}[b]{0.32\linewidth}
		\centering
		\begin{tikzpicture}[scale=0.8, transform shape]
			\draw (3,0) -- (4,1) -- (4,0) -- (5,1) -- (5,0) -- (0,1) -- (0,0) -- (1,1) -- (1,0);
			\foreach \i in {2,3}
			{
				\draw[dashed] (\i-1,0) -- (\i,1) -- (\i,0);
			}
			\foreach \i in {0,1}
			{
				\foreach \j in {0,1}
				{
					\filldraw[fill=white] (\i,\j) circle (1.5pt)
					(\i+4,\j) circle (1.5pt);
				}
			}
			\filldraw[fill=white] (3,0) circle (1.5pt);
			
			\filldraw[fill=white,dashed] (2,0) circle (1.5pt) (2,1) circle (1.5pt) (3,1) circle (1.5pt);
			\node[above] at (0,1) {$1$};
			\node[below] at (5,0) {$2n$};
			\node[below] at (2,0) {$m$};
		\end{tikzpicture}
		\caption{$\bowtie_{2n}*m$}
	\end{subfigure}
	\caption{Another example for applications of Theorem~\ref{th:enum}}\label{fig:nmlc}
\end{figure}

\begin{corollary}\label{coro:lucas}
	For the crown as shown in Figure~\ref{fig:nmlc}, we have
	\[
	\Lambda_{2n} \cong \Gamma_{2n-1} \boxplus \Gamma_{2n-3}^*,
	\]
	and
	\begin{equation}\label{eq:nmlc}
	q_k(\Lambda_{2n}) = q_k(\Gamma_{2n-1}) + q_k(\Gamma_{2n-3}) + q_{k-1}(\Gamma_{2n-3}),
	\end{equation}
	where $\Lambda_n$ is the $n$-th Lucas cube.
\end{corollary}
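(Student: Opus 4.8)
The plan is to follow exactly the route used for Corollary~\ref{coro:fib}, applying the convex expansion of Theorem~\ref{th:cetfdl} at a single vertex of the crown and then reading off the enumeration from Theorem~\ref{th:enum}. First I would recall that, by the cited result, the undirected Hasse diagram of $\mathcal{F}(\bowtie_{2n})$ is precisely the Lucas cube $\Lambda_{2n}$, so it suffices to study the filter lattice $\mathcal{F}(\bowtie_{2n})$. I would then select the bottom vertex labelled $m$ in Figure~\ref{fig:nmlc} and invoke the second identity of Theorem~\ref{th:cetfdl}, which holds for \emph{every} element of the poset, to obtain
\[
\mathcal{F}(\bowtie_{2n}) \cong \mathcal{F}(\bowtie_{2n}-m) \boxplus \mathcal{F}(\bowtie_{2n}*m).
\]
This reduces the first assertion to identifying the two subposets $\bowtie_{2n}-m$ and $\bowtie_{2n}*m$.

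Next I would read these two subposets directly off Figures~\ref{fig:nmlc}(b) and \ref{fig:nmlc}(c). Deleting the single vertex $m$ severs the cyclic crown into a path, so $\bowtie_{2n}-m$ is a fence on $2n-1$ vertices, whence $\mathcal{F}(\bowtie_{2n}-m) \cong \Gamma_{2n-1}$. Since $m$ is a minimal element, it is covered by exactly its two cyclic neighbours, so ${\mathop{\uparrow}m}\cup{\mathop{\downarrow}m}$ consists of $m$ together with those two maximal vertices; deleting these three consecutive vertices again leaves a path, now on $2n-3$ vertices, so $\mathcal{F}(\bowtie_{2n}*m) \cong \Gamma_{2n-3}^{*}$, the asterisk recording the orientation of the resulting fence. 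Substituting into the displayed isomorphism and using $\mathcal{F}(\bowtie_{2n})\cong\Lambda_{2n}$ gives $\Lambda_{2n} \cong \Gamma_{2n-1} \boxplus \Gamma_{2n-3}^{*}$, which is the first claim.

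For the enumerative identity I would apply Theorem~\ref{th:enum} with $L=\Gamma_{2n-1}$ and $K=\Gamma_{2n-3}^{*}$, obtaining
\[
q_k(\Lambda_{2n}) = q_k(\Gamma_{2n-1}) + q_k(\Gamma_{2n-3}^{*}) + q_{k-1}(\Gamma_{2n-3}^{*}).
\]
It then remains to replace $\Gamma_{2n-3}^{*}$ by $\Gamma_{2n-3}$ in the last two terms. This is justified by the remark before Theorem~\ref{th:enum} together with Theorem~\ref{th:ftfdl}: $q_k(\mathcal{F}(P))$ equals the number of $k$-element antichains of $\operatorname{Mi}(\mathcal{F}(P))\cong P$, and antichains are unchanged under order-duality because incomparability is a symmetric relation. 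Hence $q_k(\Gamma_{2n-3}^{*})=q_k(\Gamma_{2n-3})$ for every $k$, and \eqref{eq:nmlc} follows.

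The hard part will be the bookkeeping in the middle step rather than any deep argument: one must check carefully from the cyclic structure of $\bowtie_{2n}$ that $m$ has exactly two comparable neighbours, so that ${\mathop{\uparrow}m}\cup{\mathop{\downarrow}m}$ has precisely three elements and $\bowtie_{2n}*m$ is genuinely a $(2n-3)$-fence with the stated orientation, and likewise that $\bowtie_{2n}-m$ is exactly a $(2n-1)$-fence. Once these two identifications are pinned down, the remainder is a direct application of the expansion and enumeration theorems already established, with the final $\Gamma^{*}\to\Gamma$ replacement resting only on the duality-invariance of antichain counts.
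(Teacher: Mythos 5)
Your proposal is correct and follows essentially the same route as the paper: the corollary is obtained there precisely by applying the expansion of Theorem~\ref{th:cetfdl} at the vertex $m$ of the crown (as depicted in Figure~\ref{fig:nmlc}) and then invoking Theorem~\ref{th:enum}, with the identifications $\mathcal{F}(\bowtie_{2n}-m)\cong\Gamma_{2n-1}$ and $\mathcal{F}(\bowtie_{2n}*m)\cong\Gamma_{2n-3}^{*}$. Your final step replacing $q_k(\Gamma_{2n-3}^{*})$ by $q_k(\Gamma_{2n-3})$ via the duality-invariance of antichain counts is exactly the (implicit) observation the paper relies on.
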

Hence some formulas for Lucas cubes are shown easily too. For instance, using the same method in \eqref{eq:fc} for \eqref{eq:nmlc}, we have
\[
L_{2n} = F_{2n+1}+F_{2n-1}.
\]

Moreover, as similar to results in \cite{aMunarS2002b}, we can obtain the recurrence relations of rank generating functions of $\Gamma_n$ and $\Lambda_{2n}$, by Corollary~\ref{coro:rec-rgf}, respectively, but we shall omit them.

Let $d_k^-(L) = |\{\,x \in L \mid \operatorname{deg}_L^-(x) = k\,\}|$, 
$d_k^+(L) = |\{\,x \in L \mid \operatorname{deg}_L^+(x) = k\,\}|$, 
and
$d_k(L) = |\{\,x \in L \mid \operatorname{deg}_L(x) = k\,\}|$, where $\operatorname{deg}_L^-(x) = |\{\,y \in L \mid x \prec y\,\}|$, $\operatorname{deg}_L^+(x) = |\{\,y \in L \mid y \prec x\,\}|$ and $\operatorname{deg}_L(x) = \operatorname{deg}_L^-(x) + \operatorname{deg}_L^+(x)$ for $x \in L$.
It is not difficult to see that $d_0^-(L) =1$ and $d_1^-(L) = |\operatorname{Mi}(L)|$ for all finite distributive lattice $L$.

\begin{remark}
	If $L$ is a finite distributive lattice, thus both $d_k^-(L)$ and $d_k^+(L)$ is equal to the number of maximal antichains in $\operatorname{Mi}(L)$ with only $k$ elements.
	Therefore, it suffices to consider $d_k^-(L)$.
\end{remark}

In addition, the recurrence relations on $d_k^-(L)$ and $d_k(L)$ are obtained easily (see Figures~\ref{fig:deg} and \ref{fig:indeg}).
\begin{theorem}
	Let $L$ be a finite distributive lattice. If $K$ is a cutting of $L$, then
	\[
	d_k((L\boxplus K) \boxplus K) = d_k(L \boxplus K) + d_{k-2}(K).
	\]
	and
	\[
	d_k^-(L\boxplus K) = d_k^-(L) + d_{k-1}^-(K),
	\]
\end{theorem}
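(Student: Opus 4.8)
The plan is to pass everything through the filter-lattice dictionary supplied by Theorem~\ref{th:ftfdl} and the poset description of $\boxplus$ in Theorem~\ref{th:cetfdl}. Writing $L=\mathcal{F}(P)$, I would first record how the two degree-statistics read off $P$. In the anti-inclusion order an element covering a filter $F$ is exactly some $F\setminus\{m\}$ with $m\in\operatorname{Min}F$, and an element covered by $F$ is exactly some $F\cup\{z\}$ with $z\in\operatorname{Max}(P\setminus F)$; hence $\deg_L^-(F)=|\operatorname{Min}F|$ and $\deg_L^+(F)=|\operatorname{Max}(P\setminus F)|$. Since $F\mapsto\operatorname{Min}F$ is a bijection from filters onto antichains of $P\cong\operatorname{Mi}(L)$, the number $d_k^-(L)$ equals the number of $k$-element antichains of $P$ (which explains $d_0^-(L)=1$ and $d_1^-(L)=|\operatorname{Mi}(L)|$). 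I expect both identities to become transparent in this language.

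For the second identity I would count antichains. By Theorem~\ref{th:cetfdl}, $L\boxplus K=\mathcal{F}(P_K)$ with $P_K=P\cup\{x_K\}$, and by Theorem~\ref{th:cslt}(4) the elements incomparable to $x_K$ are exactly $P_K*x_K=S$, with $\mathcal{F}(S)\cong K$. A $k$-element antichain of $P_K$ either avoids $x_K$, and is then a $k$-element antichain of $P$, or contains $x_K$, in which case its remaining $k-1$ elements lie in $S$ (they must be incomparable to $x_K$) and form a $(k-1)$-element antichain of $S$; conversely any $(k-1)$-antichain of $S$ extends by $x_K$. Summing the two cases and applying the dictionary to $P$ and to $S$ yields $d_k^-(L\boxplus K)=d_k^-(L)+d_{k-1}^-(K)$.

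For the first identity I would track total degrees through the iterated expansion, using that $\boxplus K$ replaces the cutting interval $K=\mathcal{F}(S)$ of $L$ by the prism $K\times\mathbf{2}$: on the poset side $x_K$ is a single point incomparable to $S$ and sandwiched between $S_0$ and $S_1$, so the relevant interval is $\mathcal{F}(S)\times\mathbf{2}\cong K\times\mathbf{2}$. Re-applying $\boxplus K$ adds a point $x_K'$ that, being incomparable to exactly $S$, is comparable to $x_K$; thus $x_K,x_K'$ form a $2$-chain beside $S$ and $(L\boxplus K)\boxplus K$ replaces the same interval by $K\times\mathbf{3}$, while the parts $A:={\uparrow\hat{0}_K}\setminus K$ above and $B:={\downarrow\hat{1}_K}\setminus K$ below are untouched. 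I would then compute, for $x\in K$ and a level $i$, the total degree of $(x,i)$ in the region $K\times\mathbf{n}$: horizontal covers within level $i$ contribute $\deg_K(x)$, vertical covers contribute one for each existing neighbouring level, and external covers occur only at the extremes, namely $a_x$ covers of $(x,n-1)$ into $A$ and $b_x$ covers of $(x,0)$ into $B$, where $a_x,b_x$ depend only on $x$. Hence in $K\times\mathbf{2}$ the two levels over $x$ have degrees $\deg_K(x)+1+a_x$ and $\deg_K(x)+1+b_x$, while in $K\times\mathbf{3}$ the top and bottom levels carry these same two degrees and the new middle level has degree $\deg_K(x)+2$. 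As $A,B$ retain all their degrees (they touch the region only at the extreme levels, identically for $\mathbf{2}$ and $\mathbf{3}$), the degree multiset changes only by the insertion of the middle level, giving $d_k((L\boxplus K)\boxplus K)=d_k(L\boxplus K)+d_{k-2}(K)$.

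The main obstacle is precisely this boundary bookkeeping: one must verify that the external counts $a_x,b_x$ are borne only by the extreme levels and are literally equal in $K\times\mathbf{2}$ and $K\times\mathbf{3}$, so that the freshly inserted interior level is the unique source of change and contributes the clean shift $\deg_K(x)\mapsto\deg_K(x)+2$ (two vertical covers, no external ones). A preliminary point to settle is that the second expansion really produces $K\times\mathbf{3}$ rather than a copy placed elsewhere, i.e.\ that $x_K$ and $x_K'$ are forced to be comparable because each is incomparable to exactly $S$ and $x_K\notin S$; the two prism edges created thereby are exactly what account for the $+2$. I would also remark that taking $k$-sums recovers $|(L\boxplus K)\boxplus K|=|L\boxplus K|+|K|$, consistent with Theorem~\ref{th:enum}.
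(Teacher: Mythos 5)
Your proof is correct and takes essentially the same route as the paper: the paper gives no written argument at all, only Figures~\ref{fig:deg} and \ref{fig:indeg}, which depict precisely the decomposition you formalize --- the inserted middle copy of $K$ whose elements each acquire exactly two vertical covers (hence the $d_{k-2}(K)$ term), and the single extra vertical cover per element of the new copy $K'$ (hence the $d_{k-1}^-(K)$ term). Your translation of $d_k^-$ into counting $k$-element antichains of $P\cong\operatorname{Mi}(L)$ is also the interpretation the paper itself records in the remark preceding the theorem, so your write-up is in effect the rigorous version of the paper's picture proof.
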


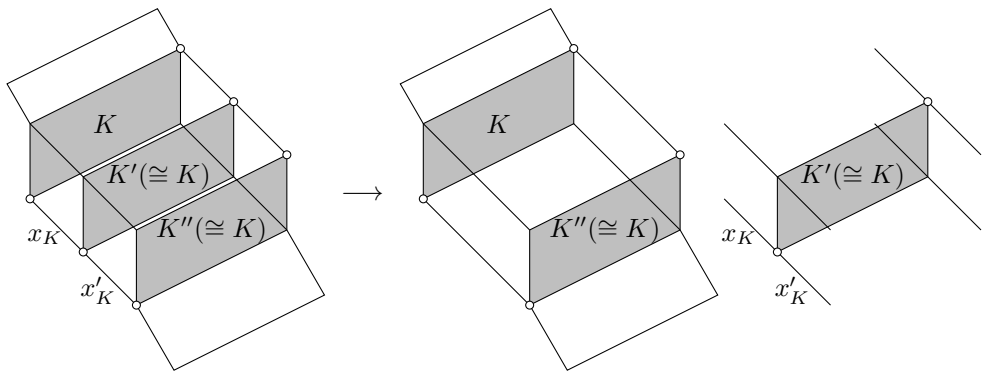
\begin{figure}[!ht]  %%deg-lkk0.tex and deg-lkk.tex
	\centering
%	\begin{subfigure}[b]{0.28\linewidth}
%		\centering
		\begin{tikzpicture}[scale=1]
			\filldraw[fill=lightgray] (0,0) -- (2,1) -- (2,2) -- (0,1) -- cycle;
			\filldraw[fill=lightgray, xshift=0.707cm, yshift=-0.707cm] (0,0) -- (2,1) -- (2,2) -- (0,1) -- cycle;
			\filldraw[fill=lightgray, xshift=1.414cm, yshift=-1.414cm] (0,0) -- (2,1) -- (2,2) -- (0,1) -- cycle;
			\draw (0,1) -- (-1*0.306,1.732*0.306+1) -- (-1*0.306+2,1.732*0.306+2) -- (2,2);
			%		(0,0) -- (1*0.5,-1.732*0.5) -- (1*0.5+2,-1.732*0.5+1) -- (2,1);
			\draw[xshift=1.414cm,yshift=-1.414cm] (0,0) -- (1*0.5,-1.732*0.5) -- (1*0.5+2,-1.732*0.5+1) -- (2,1);
			%          (0.707,-0.707) -- (1*0.75+0.707,-1.732*0.75-0.707) -- (1*0.75+2.707,-1.732*0.75+1-0.707) -- (2.707,1-0.707)
			%          (0,0) -- (0.707,-0.707) (2,1) -- (2.707,0.293) (2,2) -- (2.707,1.293) (0,1) -- (0.707,0.293);
			\foreach \i in {0,1}
			{
				\foreach \j in {0,1}
				{
					\draw (2*\i,\i+\j)++(1.414,-1.414) -- (2*\i,\i+\j);
				}
			}
			\foreach \i in {0,2}
			{
				\foreach \j in {0,1,2}
				{
					\filldraw[fill=white] (\i+\j*0.707,\i-\j*0.707) circle (1.5pt);
				}
			}
			\node at (0.2,-0.507) {$x_K$};
			\node at (0.2+0.707,-0.507-0.707) {$x'_K$};
			\node at (1,1) {$K$};
			\node at (1+0.707,1-0.707) {$K'(\cong K)$};
			\node at (1+1.414,1-1.414) {$K''(\cong K)$};
		\end{tikzpicture}
%		\caption{$L$ with a cutting $K$}
%	\end{subfigure}
	\raisebox{15ex}{${}\longrightarrow{}$}
%	\begin{subfigure}[b]{0.45\linewidth}
%		\centering
		\begin{tikzpicture}[scale=1]
			\filldraw[fill=lightgray] (0,0) -- (2,1) -- (2,2) -- (0,1) -- cycle;
			\filldraw[fill=lightgray, xshift=4.707cm, yshift=-0.707cm] (0,0) -- (2,1) -- (2,2) -- (0,1) -- cycle;
			\filldraw[fill=lightgray, xshift=1.414cm, yshift=-1.414cm] (0,0) -- (2,1) -- (2,2) -- (0,1) -- cycle;
			\draw (0,1) -- (-1*0.306,1.732*0.306+1) -- (-1*0.306+2,1.732*0.306+2) -- (2,2);
			%		(0,0) -- (1*0.5,-1.732*0.5) -- (1*0.5+2,-1.732*0.5+1) -- (2,1);
			\draw[xshift=1.414cm,yshift=-1.414cm] (0,0) -- (1*0.5,-1.732*0.5) -- (1*0.5+2,-1.732*0.5+1) -- (2,1);
			%          (0.707,-0.707) -- (1*0.75+0.707,-1.732*0.75-0.707) -- (1*0.75+2.707,-1.732*0.75+1-0.707) -- (2.707,1-0.707)
			%          (0,0) -- (0.707,-0.707) (2,1) -- (2.707,0.293) (2,2) -- (2.707,1.293) (0,1) -- (0.707,0.293);
			\foreach \i in {0,1}
			{
				\foreach \j in {0,1}
				{
					\draw (2*\i,\i+\j)++(1.414,-1.414) -- (2*\i,\i+\j);
				}
			}
			\foreach \i in {0,1}
			{
				\foreach \j in {0,1}
				{
					\draw[xshift=4cm] (2*\i,\i+\j)++(1.414,-1.414) -- (2*\i,\i+\j);
				}
			}
			\foreach \i in {0,2}
			{
				\foreach \j in {0,2} %%{0,1,2}
				{
					\filldraw[fill=white] (\i+\j*0.707,\i-\j*0.707) circle (1.5pt);
				}
				\filldraw[fill=white] (\i+4.707,\i-0.707) circle (1.5pt);
			}
%			\node at (0.2,-0.507) {$x_K$};
%			\node at (0.2+0.707,-0.507-0.707) {$x'_K$};
			\node at (0.2+4,-0.507) {$x_K$};
			\node at (0.2+4+0.707,-0.507-0.707) {$x'_K$};
			\node at (1,1) {$K$};
			\node at (1+4+0.707,1-0.707) {$K'(\cong K)$};
			\node at (1+1.414,1-1.414) {$K''(\cong K)$};
		\end{tikzpicture}
%		\caption{$L$ with a cutting $K$}
%	\end{subfigure}
	\caption{Illustrating the recurrence relation of $d_k(L \boxplus K \boxplus K)$}\label{fig:deg}
\end{figure}

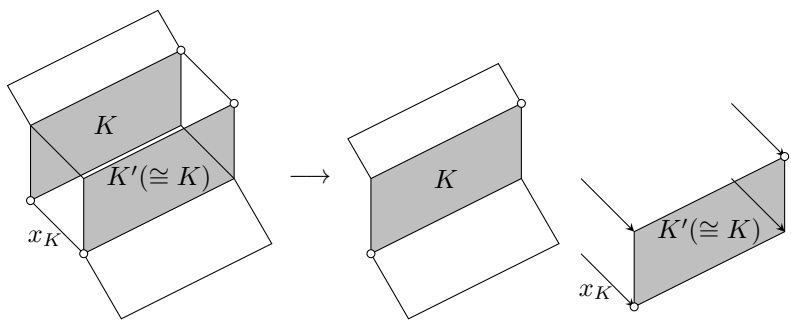
\begin{figure}[!ht]  %%l&k.tex and lbpk.tex
	\centering
%	\begin{subfigure}[b]{0.27\linewidth}
%		\centering
		\begin{tikzpicture}[scale=1]
			\filldraw[fill=lightgray] (0,0) -- (2,1) -- (2,2) -- (0,1) -- cycle
			(0.707,-0.707) -- (2.707,1-0.707) -- (2.707,2-0.707) -- (0.707,1-0.707) -- cycle;
			\draw (0,1) -- (-1*0.306,1.732*0.306+1) -- (-1*0.306+2,1.732*0.306+2) -- (2,2);
			
			\draw[xshift=0.707cm,yshift=-0.707cm] (0,0) -- (1*0.5,-1.732*0.5) -- (1*0.5+2,-1.732*0.5+1) -- (2,1);
			
			\foreach \i in {0,1}
			{
				\foreach \j in {0,1}
				{
					\draw (2*\i,\i+\j)++(0.707,-0.707) -- (2*\i,\i+\j);
				}
			}
			\foreach \i in {0,2}
			{
				\foreach \j in {0,1}
				{
					\filldraw[fill=white] (\i+\j*0.707,\i-\j*0.707) circle (1.5pt);
				}
			}
			\node at (0.2,-0.507) {$x_K$};
			\node at (1,1) {$K$};
			\node at (1+0.707,1-0.707) {$K'(\cong K)$};
		\end{tikzpicture}
%		\caption{$L$ with a cutting $K$}
%	\end{subfigure}
	\raisebox{12ex}{${}\longrightarrow{}$}
%	\begin{subfigure}[b]{0.45\linewidth}
%		\centering
		\begin{tikzpicture}[scale=1]
			\filldraw[fill=lightgray] (0,0) -- (2,1) -- (2,2) -- (0,1) -- cycle;
			\filldraw[fill=lightgray, xshift=3.5cm, yshift=-0.707cm] (0,0) -- (2,1) -- (2,2) -- (0,1) -- cycle;
			\draw (0,1) -- (-1*0.306,1.732*0.306+1) -- (-1*0.306+2,1.732*0.306+2) -- (2,2);
			
			\draw (0,0) -- (1*0.5,-1.732*0.5) -- (1*0.5+2,-1.732*0.5+1) -- (2,1);
			
			\foreach \i in {0,1}
			{
				\foreach \j in {0,1}
				{
					\draw[xshift=3.5cm, yshift=-0.707cm, -stealth] (2*\i,\i+\j)++(-0.707,0.707) -- (2*\i,\i+\j);
				}
			}
			\foreach \i in {0,2}
			{
				\foreach \j in {0,1}
				{
					\filldraw[fill=white] (\i+\j*3.5,\i-\j*0.707) circle (1.5pt);
				}
			}
			\node at (3,-0.507) {$x_K$};
			\node at (1,1) {$K$};
			\node at (1+3.5,1-0.707) {$K'(\cong K)$};
		\end{tikzpicture}
%		\caption{$L$ with a cutting $K$}
%	\end{subfigure}
	\caption{Illustrating the recurrence relation of $d_k(L \boxplus K)$}\label{fig:indeg}
\end{figure}

By definitions of $q_k(L)$ and $d_k^-(L)$, we have the results on them.
\begin{proposition}\label{prop:rel-qd-}
	Let $L$ be a finite distributive lattice and $m$ is the size of set of all maximal antichains in $\operatorname{Mi}(L)$. Then
	\[
	q_k(L) = \sum_{j=k}^m \binom jk d_j^-(L);
	\]
	moreover, by binomial transform,
	\[
	d_k^-(L) = \sum_{j=k}^m (-1)^{j-k}\binom jk q_j(L).
	\]
\end{proposition}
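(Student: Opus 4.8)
The plan is to prove the first identity by a double count of convex Boolean sublattices, organized by their bottom element, and then to derive the second identity as its binomial inversion. Throughout I identify $L$ with $\mathcal{F}(P)$, where $P=\operatorname{Mi}(L)$, via Theorem~\ref{th:ftfdl}. The one fact I would record at the outset is that the upper covers of a filter $F$ are exactly the filters $F\setminus\{z\}$ with $z\in\operatorname{Min}(F)$ (removing a minimal element of an up-set leaves an up-set, and only such removals do), so that $\deg_L^-(F)=|\operatorname{Min}(F)|$.

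First I would establish the key bijection: the convex sublattices of $L$ isomorphic to $\mathbf{B}_k$ are in one-to-one correspondence with pairs $(x,A)$, where $x\in L$ and $A$ is a $k$-element set of upper covers of $x$. Writing $x=F$ and letting $A$ be given by $k$ distinct minimal elements $z_1,\dots,z_k\in\operatorname{Min}(F)$, the $2^k$ filters $F\setminus S$ with $S\subseteq\{z_1,\dots,z_k\}$ are pairwise distinct and constitute the interval $[\,F\setminus\{z_1,\dots,z_k\},\,F\,]$, which is thus a convex sublattice isomorphic to $\mathbf{B}_k$. Conversely, a convex $\mathbf{B}_k$ is an interval whose bottom $x$ together with its $k$ atoms recovers such a pair, the atoms being genuine upper covers of $x$ in $L$ by order-convexity. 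Counting these pairs according to their first coordinate yields
\[
q_k(L)=\sum_{x\in L}\binom{\deg_L^-(x)}{k}.
\]

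Next I would regroup the sum by the value $j=\deg_L^-(x)$. Collecting the $d_j^-(L)$ elements of degree $j$, and using that $\binom{j}{k}=0$ for $j<k$ while $d_j^-(L)=0$ once $j$ exceeds the width of $P$ (no element can have more upper covers than the largest antichain size $m$), I obtain
\[
q_k(L)=\sum_{j=k}^{m}\binom{j}{k}\,d_j^-(L),
\]
which is the first identity.

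Finally, the second identity is the inverse binomial transform of the first. I would substitute $q_j(L)=\sum_{i\ge j}\binom{i}{j}d_i^-(L)$ into $\sum_{j\ge k}(-1)^{j-k}\binom{j}{k}q_j(L)$, interchange the two summations, and apply the orthogonality relation $\sum_{j=k}^{i}(-1)^{j-k}\binom{i}{j}\binom{j}{k}=\delta_{ik}$ (which follows from $\binom{i}{j}\binom{j}{k}=\binom{i}{k}\binom{i-k}{j-k}$ and $(1-1)^{i-k}$); the double sum then collapses to the single term $d_k^-(L)$, giving $d_k^-(L)=\sum_{j=k}^{m}(-1)^{j-k}\binom{j}{k}q_j(L)$. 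The main obstacle is the bijection in the second paragraph: one must verify that any $k$ upper covers of a common element generate a sublattice that is exactly $\mathbf{B}_k$ (the $2^k$ joins are distinct, and the sublattice is order-convex in $L$), and this is precisely where distributivity is used. The regrouping and the inversion are then routine.
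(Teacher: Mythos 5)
Your proof is correct and complete. Note that the paper itself gives no proof of this proposition at all --- it is prefaced only by ``By definitions of $q_k(L)$ and $d_k^-(L)$, we have the results on them'' --- so your argument supplies exactly the counting that the paper leaves implicit: in $L=\mathcal{F}(P)$ the upper covers of a filter $F$ are the filters $F\setminus\{z\}$ with $z\in\operatorname{Min}(F)$; a convex $\mathbf{B}_k$ is an interval determined by its bottom element together with its $k$ atoms, which by convexity are covers in $L$; hence $q_k(L)=\sum_{x\in L}\binom{\deg_L^-(x)}{k}$, which regroups into the first identity, and the second follows by standard binomial inversion, exactly as you carry it out. Two small remarks. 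First, under the paper's anti-inclusion order on $\mathcal{F}(P)$ the interval you construct should be written $[F,\,F\setminus\{z_1,\dots,z_k\}]$ (bottom $F$, top $F\setminus\{z_1,\dots,z_k\}$); your notation $[\,F\setminus\{z_1,\dots,z_k\},\,F\,]$ is oriented by inclusion rather than by the lattice order --- harmless, since the underlying set of $2^k$ filters is the same. Second, the paper's phrase defining $m$ (``the size of set of all maximal antichains in $\operatorname{Mi}(L)$'') is ambiguous, and your reading of $m$ as the width of $\operatorname{Mi}(L)$ (the maximum antichain size) is the one that makes the summation limits correct: via $F\mapsto\operatorname{Min}(F)$, elements of up-degree $j$ correspond to $j$-element antichains of $\operatorname{Mi}(L)$, so $d_j^-(L)=0$ precisely for $j$ exceeding the width. (Indeed, your bijection also shows that the paper's side remarks identifying $q_k(L)$ with the number of $k$-antichains, and $d_k^-(L)$ with the number of \emph{maximal} $k$-antichains, are misstatements; the proposition itself, with $q_k$ and $d_k^-$ as defined, is exactly what you proved.)
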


Let $Q_L(x) = \sum_{k=0}^m q_k(L) x^k$ and $D_L^-(x) = \sum_{k=0}^m d_k^-(L) x^k$, from Proposition~\ref{prop:rel-qd-}, a simple calculation leads to the following two corollaries.
\begin{corollary}
	Let $L$ be a finite distributive lattice and let $m$ denote the size of set of all maximal antichains in $\operatorname{Mi}(L)$.  We have
	\[
	Q_L(x) = D_L^-(1+x).
	\]
\end{corollary}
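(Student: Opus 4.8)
The plan is to verify the identity by substituting $1+x$ directly into the polynomial $D_L^-$ and expanding by the binomial theorem, then interchanging the order of the two finite summations and invoking the first identity of Proposition~\ref{prop:rel-qd-} to collapse the inner sum into $q_k(L)$.

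First I would write, straight from the definition $D_L^-(x)=\sum_{k=0}^m d_k^-(L)x^k$, the substituted form
\[
D_L^-(1+x) = \sum_{j=0}^m d_j^-(L)(1+x)^j.
\]
Next I would apply the binomial theorem to each factor $(1+x)^j = \sum_{k=0}^j \binom jk x^k$, so that
\[
D_L^-(1+x) = \sum_{j=0}^m d_j^-(L) \sum_{k=0}^j \binom jk x^k.
\]
Since both index ranges are finite, I may interchange the order of summation without any convergence concerns; grouping by the power of $x$ gives
\[
D_L^-(1+x) = \sum_{k=0}^m \Bigl( \sum_{j=k}^m \binom jk d_j^-(L) \Bigr) x^k.
\]

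The key step is then to recognize the parenthesized inner sum as $q_k(L)$ by the first identity of Proposition~\ref{prop:rel-qd-}, namely $q_k(L) = \sum_{j=k}^m \binom jk d_j^-(L)$. Making this replacement yields $D_L^-(1+x) = \sum_{k=0}^m q_k(L)x^k$, which is exactly $Q_L(x)$ by the definition of $Q_L$, completing the argument.

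I do not anticipate a genuine obstacle here: the whole proof is a one-line binomial-theorem computation dressed as a generating-function substitution. The only point requiring mild care is bookkeeping of the summation limits when swapping orders—ensuring the outer index $k$ runs from $0$ to $m$ while the inner index $j$ runs from $k$ to $m$—so that the resulting inner sum matches the range in Proposition~\ref{prop:rel-qd-} verbatim. One could equally well run the argument in reverse, starting from $Q_L$ and applying the inverse binomial transform, but the forward direction presented above is the shortest.
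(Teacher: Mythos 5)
Your proof is correct and is exactly the ``simple calculation'' the paper has in mind: the corollary is stated as an immediate consequence of Proposition~\ref{prop:rel-qd-}, and your substitution of $1+x$ into $D_L^-$, binomial expansion, and interchange of the finite sums to recover $q_k(L) = \sum_{j=k}^m \binom jk d_j^-(L)$ is precisely that computation. No gaps; the summation bookkeeping is handled correctly.
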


In addition, for a finite distributive lattice $L$, because of the fact that there is only one element covered by no element in $L$ and the element having exactly one cover is meet-irreducible, the following corollary is immediate.
\begin{corollary}
	Let $L$ be a finite distributive lattice. Then
	\[
	Q_L(-1) = q_0(L) - q_1(L) + q_2(L) - q_3(L) +\cdots = D_L^-(0) = 1,
	\]
	and
	\[
	\left. \frac{\mathrm{d}\, Q_L(x)}{\mathrm{d}\, x} \right|_{x=-1} = \left. \frac{\mathrm{d}\, D_L^-(x)}{\mathrm{d}\, x} \right|_{x=0} = d_1^{-}(L) = |\operatorname{Mi}(L)|.
	\]
\end{corollary}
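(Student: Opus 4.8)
The plan is to derive both displays formally from the immediately preceding corollary, namely the polynomial identity $Q_L(x) = D_L^-(1+x)$, by reading off the constant and linear coefficients of $D_L^-$. First I would establish the first display. Substituting $x = -1$ into $Q_L(x) = D_L^-(1+x)$ gives at once $Q_L(-1) = D_L^-(0)$, which is the middle equality. The left-hand alternating sum is then just the definition, $Q_L(-1) = \sum_{k=0}^m q_k(L)(-1)^k = q_0(L) - q_1(L) + q_2(L) - \cdots$. To identify the common value with $1$, I would expand $D_L^-(x) = \sum_{k=0}^m d_k^-(L)x^k$ and evaluate at $x = 0$: every term with $k \ge 1$ vanishes, leaving $D_L^-(0) = d_0^-(L)$. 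I would then invoke $d_0^-(L) = 1$, which holds because the only $x \in L$ with $\deg_L^-(x) = 0$, i.e.\ with no upper cover, is the top element $\hat1_L$.

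For the second display I would differentiate the same polynomial identity. Since the inner function $1+x$ has derivative $1$, the chain rule gives $Q_L'(x) = (D_L^-)'(1+x)$ as an identity of polynomials, and evaluating at $x = -1$ yields $Q_L'(-1) = (D_L^-)'(0)$, the middle equality. Writing $(D_L^-)'(x) = \sum_{k \ge 1} k\,d_k^-(L)x^{k-1}$ and setting $x = 0$ isolates the $k = 1$ term, so $(D_L^-)'(0) = d_1^-(L)$. Finally I would use $d_1^-(L) = |\operatorname{Mi}(L)|$, which holds because in a finite distributive lattice the elements with exactly one upper cover are precisely the meet-irreducibles; this completes the chain of equalities.

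I do not expect a genuine obstacle here: once the previous corollary is in hand, the statement is a purely formal consequence of the substitution $x \mapsto 1+x$ and a single differentiation of polynomials. The only step warranting a word of justification is the combinatorial reading of the two lowest coefficients of $D_L^-$, namely that $\hat1_L$ is the unique element with no upper cover and that meet-irreducibility is equivalent to having a single upper cover. Both facts are already recorded in the paragraph preceding the statement, so they may simply be cited rather than reproved.
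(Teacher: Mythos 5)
Your proposal is correct and follows essentially the same route as the paper: the paper declares the corollary ``immediate'' from the identity $Q_L(x) = D_L^-(1+x)$ together with the two facts recorded just beforehand, namely $d_0^-(L)=1$ (only $\hat1_L$ has no upper cover) and $d_1^-(L)=|\operatorname{Mi}(L)|$ (elements with exactly one upper cover are exactly the meet-irreducibles). Your write-up simply makes the substitution at $x=-1$ and the single differentiation explicit, which is precisely the argument the paper leaves implicit.
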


This means that we verify the generalized Euler formula for polyhedrons with Euler characteristic $\chi = 1$ \cite{bDodsoP1997} and a special case of \cite{aSkrek2001}.

We finally remark that all results on filter can be written dually the conclusions on ideal and remain correct.

\section*{Acknowledgments}
\addcontentsline{toc}{section}{Acknowledgments}

The authors are grateful to the referees for their careful reading and many valuable suggestions.

%\bibliographystyle{acmke}
%\bibliography{../../../JabRef/paper,../../../JabRef/book}

\end{document}